\documentclass[12pt]{article}
\usepackage{graphicx} 
\usepackage{tikz}
\usepackage{subcaption}
\usepackage{amsmath, amsthm, amssymb}
\usepackage{fullpage}
\usepackage{xcolor}
\usepackage{makeidx}
\usepackage[normalem]{ulem}
\usepackage{caption}
\usetikzlibrary{decorations.pathreplacing}
\makeindex
\theoremstyle{definition}
\newtheorem{theorem}{Theorem}[section]
\newtheorem{lemma}[theorem]{Lemma}
\newtheorem{corollary}[theorem]{Corollary}

\newtheorem{definition}[theorem]{Definition}
\newtheorem{remark}[theorem]{Remark}
\newtheorem{example}[theorem]{Example}
\DeclareMathOperator*{\Res}{Res}
\newcommand{\floor}[1]{\left\lfloor #1 \right\rfloor}

\title{Laplace measure transitions and ghosts\\ for meromorphic functions} 
\author{João Fontinha \thanks{The first author acknowledges partial support from FCT grant 2024.02716.BD.} \\ DCM and CEMS.UL \\ Ciências ULisboa  \\ jmfontinha@ciencias.ulisboa.pt  \and  Jorge Buescu \thanks{The second author acknowledges partial support from CEMS.UL - Center for Mathematical Studies, Ciências, ULisboa
FCT - UID/04561/2025 - https://doi.org/10.54499/UID/04561/2025.}\\ DCM and CEMS.UL \\ Ciências ULisboa \\ jsbuescu@ciencias.ulisboa.pt \and  Jaouen Ramalho \\ DCM \\ Ciências ULisboa \\ fc61993@alunos.ciencias.ulisboa.pt}

\begin{document}
\maketitle
\begingroup
\renewcommand{\thefootnote}{}
\footnotetext[0]{The authors declare no conflict of interest.}
\endgroup
\printindex
\vspace{-0.3em}
\begin{abstract}
We study the measure transition problem for bilateral Laplace transforms of meromorphic functions on vertical strips. Given a meromorphic function \(F\) admitting Laplace representations on two adjacent strips separated by a vertical line, we investigate how the corresponding determining measures are related. Our first result shows  that in the absence of poles on the separatrix the determining measures coincide. We next derive explicit transition formulas for the case of finitely many poles and obtain sufficient conditions under which these formulas remain valid for infinitely many poles. Applications are given to the analytic continuation of the Riemann $\zeta$ function, periodic and almost periodic functions, and quotients of $\Gamma$ functions related to the confluent hypergeometric function. Finally, using generalized Cauchy integrals, we construct an entire function admitting distinct Laplace representations on the right and left half-planes, thereby producing a ghost transition. This  provides a counterexample to uniqueness of solutions of the Cauchy problem for the heat equation.
\end{abstract}
\medskip

\noindent \textbf{2020 Mathematics Subject Classification:}
Primary 44A10; Secondary 30D10, 30E20, 35K05, 42A82.

\medskip
\vspace{-0.60em}
\noindent \textbf{Keywords and phrases:}
Laplace transform, analytic continuation, Phragmén--Lindelöf, heat equation, confluent hypergeometric function, zeta function, positive definiteness.

\section{Introduction}

Let $\mu: \mathbb{R} \longrightarrow \mathbb{C}$ be a function of bounded variation on every finite interval. We consider the  classical bilateral Laplace-Stieltjes transform of $\mu$ in the sense of Widder \cite{Widder}, defined by \begin{equation}\label{eq:Lapdef} \mathcal{L}(\mu)(s)= \int_{- \infty}^{+ \infty} e^{-st} \,d\mu(t). \end{equation}

\noindent Denote by $\Omega_{\alpha,\beta}= \{ s \in \mathbb{C} : \alpha<\Re(s)<\beta  \}$,  where $- \infty \leq \alpha < \beta \leq + \infty$, the vertical strip on the complex plane bounded by $\alpha, \beta$. Classical Laplace transform properties  ensure that $\mathcal{L}(\mu)(s)$ 
is holomorphic in its maximal strip of convergence $\Omega_{a,b}$, with  $- \infty \leq a < b \leq + \infty$.

The measure transition problem for the Laplace transform may be formulated as follows:

\textit{Let $F(s)$ be a meromorphic function on the vertical strip $\Omega_{a,b},$ whose singularities lie on the line $\Re(s)=c,$ for some $a<c<b.$ Suppose that $F$ admits Laplace representations 
\[F(s)=\mathcal{L}(\mu_l)(s) \quad {\rm for} \ s \in \Omega_{a,c} \quad {\rm and}  \quad F(s)=\mathcal{L}(\mu_r)(s) \quad for \quad s \in \Omega_{c,b}.\]  
Establish the relationship between the determining measures $\mu_r$ and $\mu_l.$ }

Several approaches to the measure transition problem have appeared in the literature. 
The first mention of the problem appears to date back to Van der Pol \cite{VanderPol}, who used classical Laplace inversion techniques to derive a transition formula for the difference $\mu_r-\mu_l$ in the case of a finite number of poles. 
His method relied on the uniform vanishing of the quotient  \(F(s)/s\) as \(|\Im(s)| \to +\infty\) inside the strip $\Omega_{a,b}$. The significance of the problem was demonstrated through applications of the transition formula to the analytic continuation of several meromorphic functions of number-theoretic interest, including the Riemann $\zeta$ function.
The subject was later revisited independently by Buescu {\em et al.} \cite{BuescuReal, BuescuComplex, 
BuescuStrips, BuescuPolar, BuescuSpecial} and Harper \cite{Harper}. 
In the first case the context is that of polar (co-positive or co-negative definite, see Definition \ref{def_coco} below) meromorphic functions.
 In this setting, the determining measures are decomposed into principal and holomorphic parts, and a transition formula for $d\mu_r-d\mu_l$ was obtained under the assumption that the holomorphic components extend across the separatrix. 
A different point of view was considered by Harper,  who established a connection between the existence of two Laplace representations for a holomorphic function $F$ on a vertical strip and solutions of the Cauchy problem for the heat equation on the infinite rod. By imposing suitable bounds on the $L^2$-norm of $F$ along vertical lines, Harper applied uniqueness results for the heat equation to obtain sufficient conditions under which $\mu_r=\mu_l$.

This paper is organized as follows. 
In section \ref{sec_definitions} we fix the definitions and notation used throughout. In section \ref{sec_transition_formula} we provide sufficient conditions on the vertical growth of \( F(s) \) ensuring that \( \mu_r = \mu_l \) when no poles are present on the separatrix \( \Re ( s) = c \). The proof combines classical inversion techniques with Phragmén--Lindelöf principles. This allows us to generalize the transition formulas of \cite{VanderPol} and \cite{BuescuPolar} for a finite number of poles. In section \ref{sec_infinite_poles} we provide sufficient conditions under which our transition formula remains valid in the presence of infinitely many poles. 
In Section \ref{sec_applications} we present several examples and applications of the transition formula to problems of analytic continuation, including the zeta function, periodic and almost periodic functions and confluent hypergeometric functions, extending examples previously considered in \cite{BuescuPolar}.
In Section \ref{sec_ghosts}, using generalized Cauchy integrals, we investigate how relaxing the Phragmén--Lindelöf growth conditions allows for the construction of an entire function admitting Laplace representations $F(s)=\mathcal{L}(\mu_r)(s)$ on the right half-plane $\Re(s)>0$ and $F(s)=\mathcal{L}(\mu_l)(s)$ on the left half-plane $\Re(s)<0$, for which the measures $\mu_r$ and $\mu_l$ differ on a subset of the real line of positive Lebesgue measure. This result extends the approach of Harper \cite{Harper} by providing an explicit construction for which $\mu_r \neq \mu_l$, and recovers a well-known counterexample to the uniqueness of the Cauchy problem for the heat equation.

\section{Notation and definitions}
\label{sec_definitions}

\begin{definition}[Normalized determining function]
\label{def_normalized}

\noindent We say that $\mu(t)$ is normalized in $\mathbb{R}$ if and only if $\mu(0)=0$ and \begin{equation*}
    \mu(t)= \frac{\mu(t^+)+ \mu(t^-)}{2} \qquad t \in \mathbb{R}.
\end{equation*}
    
\end{definition}
Throughout this paper all Laplace transforms are taken with respect to normalized determining functions. This is a standard procedure  (see the works of Hille and Widder \cite{ HilleMoment, Widder}) and entails no loss of generality. Indeed, if \( \mu(t) \) is of locally  bounded variation, the set of its points of discontinuity is at most countable. Consequently, normalizing the determining function does not affect the value of the Laplace--Stieltjes integral.

\begin{definition}[Laplace pair]
\label{def_Laplace_pair}

    We say that $F(s)$ is a Laplace pair $(\mu_l, \mu_r)$ on the vertical strip $\Omega_{a,b}$ if $F(s)$ is holomorphic or meromorphic on $\Omega_{a,b}$ having Laplace representations $F=\mathcal{L}(\mu_l)$ and $F=\mathcal{L}(\mu_r)$ on $\Omega_{a,c}$ and $\Omega_{c,b}$ respectively for some $a<c<b.$ We refer to the line $\Re(s)=c$ as a {\em separatrix} for the strip $\Omega_{a,b}.$
\end{definition}

\begin{remark} \label{rem_polos_na_separatriz}
Note that the hypothesis of $F$ having Laplace representations on each of the strips $\Omega_{a,c}$ and $\Omega_{c,b}$ implies that it is holomorphic in each of these strips, so that possible poles must lie on the separatrix $\Re(z)=c$.
   
\end{remark}

\begin{remark}
    The classical bilateral Laplace transform \begin{equation*}
    \mathcal{L}(f)(s)=\int_{-\infty}^{+ \infty}e^{-st}f(t) \,dt
\end{equation*} of a locally integrable  real-valued 
function $f(t)$ may be seen as a special case of \eqref{eq:Lapdef} by setting $\mu(t)= \int_{0}^{t} f(u) \, du.$ Note that this definition implies that $\mu(t)$ is normalized; we sometimes refer to $f(t)$ as {\em normalized determining function} and $(f_l, f_r)$ as {\em Laplace pair}, respectively.  The usage will always be clear from the context.
\end{remark}

As in \cite{BuescuPolar, BuescuSpecial}, to define polar functions on vertical strips we use the following characterization due to Widder
 \cite{Widder2}, which was originally formulated for squares in the complex plane, but can be easily extended to vertical strips.
\begin{definition}
\label{def_coco}
(Co-positive, co-negative definite functions) 
A holomorphic function \(F\) on a vertical strip \(\Omega_{a,b}\) is said to be \emph{co-positive definite} (respectively \emph{co-negative definite}) if it admits an absolutely convergent Laplace representation
\begin{equation}
    \label{eq_Widder}
F(s)=\int_{-\infty}^{+\infty} e^{-st}\,d\mu(t), \qquad s\in\Omega_{a,b},
\end{equation}
where the determining measure \(\mu\) is non-decreasing (respectively non-increasing). We say that \(F\) has \emph{polarity}, or that it is a \emph{polar function} on \(\Omega_{a,b}\), if it is either co-positive definite or co-negative definite on that strip.
    
\end{definition}

\begin{remark}

Note that co-positive definite functions correspond to classical positive definite functions on vertical lines within the strip; along each such line Widder's characterization \eqref{eq_Widder} reduces to Bochner's theorem (see \cite{Reed} for details). 
    
\end{remark}

We shall make  use of the following standard properties of Laplace transforms. 

\begin{lemma}[Inversion theorem for the Laplace transform]\label{lem:inv}

     If $\mu(t)$ is a normalized function of bounded variation in every finite interval, and if the Laplace integral \[ F(s)= \int_{- \infty} ^{+ \infty} e^{-st} \, d\mu(t)\] converges in the strip $\Omega_{a,b}$, then for all $t$ 
     \begin{equation} \label{eq:inv}
        \lim_{R \to \infty} \frac{1}{2 \pi i} \int_{c-iR}^{c+ iR} \frac{F(s)}{s}e^{st} \,ds = \begin{cases} \mu(t)-\mu(- \infty), \quad \text{for} \quad  c>0, \,  a<c<b) \\
        \mu(t)-\mu(+\infty), \quad \text{for} \quad  c<0, \, a<c<b
        \end{cases},
    \end{equation}
    where \( \mu(\pm \infty)\) denote the limits \( \lim_{t \to \pm \infty} \mu(t).\)
 \end{lemma}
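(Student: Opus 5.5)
We reduce the statement to the classical complex inversion formula for the one-sided Stieltjes transform (Widder, Theorem II.7). The case $c>0$ is the main one; the case $c<0$ follows from it by the reflection $t\mapsto -t$. Throughout, $c$ denotes the abscissa of the integration line, not the separatrix.

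\emph{Step 1: splitting.} Fix $t$ and write $\mu=\mu_1+\mu_2$, where $\mu_1$ coincides with $\mu$ on $[0,\infty)$ and is constant, equal to $\mu(0)=0$, on $(-\infty,0]$. Then $\mu_2=\mu-\mu_1$ vanishes on $[0,\infty)$. This gives $F=F_1+F_2$ with
\[
F_1(s)=\int_0^{\infty}e^{-su}\,d\mu(u),\qquad F_2(s)=\int_{-\infty}^{0}e^{-su}\,d\mu(u).
\]
$F_1$ converges for $\Re s>a$, and $F_2$, after $u\mapsto -u$, is a one-sided transform in $-s$ converging for $\Re s<b$. The normalization at $0$ splits the jump of $\mu$ at $0$ evenly between the two pieces.

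\emph{Step 2: the right-sided piece.} For $F_1$, use the one-sided inversion theorem: for $c>\max(0,a)$,
\[
\lim_{R\to\infty}\frac{1}{2\pi i}\int_{c-iR}^{c+iR}F_1(s)\frac{e^{st}}{s}\,ds=\mu_1(t)-\mu_1(0^-)=\mu_1(t).
\]
The standard proof integrates by parts, $F_1(s)=s\int_0^\infty e^{-su}\mu_1(u)\,du$, which is valid for $\Re s>\max(a,0)$. It then applies the Dirichlet-kernel argument to the absolutely convergent transform of $e^{-cu}\mu_1(u)$. Normalization gives the midpoint value at jumps.

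\emph{Step 3: the left-sided piece.} For $F_2$, integrate by parts in the other direction: for $0<\Re s<b$,
\[
\frac{F_2(s)}{s}=-\int_{-\infty}^{0}e^{-su}\bigl(\mu(u)-\mu(-\infty)\bigr)\,du .
\]
The existence of $\mu(-\infty)$ must be justified. Convergence of $\int_{-\infty}^0 e^{-su}\,d\mu$ at some $s$ with $\Re s=c>0$ means the weight $e^{-su}$ grows as $u\to-\infty$. A standard lemma gives $\mu(u)-\mu(-\infty)=o(e^{cu})$ and convergence of $\int_{-\infty}^0 d\mu$ itself; this is exactly where $c>0$ is used. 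The right-hand side is then the Laplace transform of an integrable function, and the inversion integral of $F_2(s)/s$ along $\Re s=c$ returns $(\mu_2(t)-\mu(-\infty))\cdot\mathbf 1$-type values: $\mu(t)-\mu(-\infty)$ for $t<0$, and $-\mu(-\infty)$ on the $t\ge 0$ side, where $\mu_2(t)=0$. Adding this to Step 2 gives $\mu(t)-\mu(-\infty)$ for all $t$.

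\emph{Step 4: the case $c<0$.} Apply the same argument to $\tilde\mu(u)=-\mu(-u)$, which is normalized with transform $F(-s)$ on $\Omega_{-b,-a}$. Substituting $s\mapsto -s$ in the contour integral converts $\mu(-\infty)$ into $\mu(+\infty)$.

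\emph{Main obstacle.} The delicate point is Step 3 (and the analogous estimate in Step 2): handling merely conditional convergence of the Stieltjes integral on a line, so that the limit as $R\to\infty$ exists symmetrically. The existence of $\mu(\mp\infty)$ also has to be obtained from convergence at a point with $\Re s$ of the appropriate sign. I would handle both through the integrated function $\int e^{-cu}\mu(u)\,du$ and Jordan's test for Fourier integrals, as in Widder's treatment.
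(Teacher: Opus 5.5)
Your overall route is the standard reduction behind Widder, Ch.~VI, Thm.~5b, which is all the paper gives as a proof. That route splits $\mu$ at $0$, inverts the right-sided piece by the one-sided theorem, handles the left-sided piece by integration by parts and Jordan's test, and reflects $t\mapsto -t$ for $c<0$; Step~4 in particular is correct. However, the identity displayed in Step~3 is false, and as written it cannot give the values you then claim. Integrating by parts on $[U,0]$ produces the boundary term $\mu(0)-\mu(-\infty)=-\mu(-\infty)$ at the upper limit. So for $0<\Re s<b$
\[
\frac{F_2(s)}{s}=-\frac{\mu(-\infty)}{s}+\int_{-\infty}^{0}e^{-su}\bigl(\mu(u)-\mu(-\infty)\bigr)\,du=\int_{-\infty}^{\infty}e^{-su}\bigl(\mu_2(u)-\mu(-\infty)\bigr)\,du .
\]
Your formula has the wrong sign and no $1/s$ term. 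It is the transform of a function supported on $(-\infty,0]$, so its inversion would return $-(\mu(t)-\mu(-\infty))$ for $t<0$ and $0$ for $t>0$. It would never give the constant $-\mu(-\infty)$ for $t>0$.

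That constant comes from inverting $-\mu(-\infty)/s$ along $\Re s=c$, which gives $-\mu(-\infty)H(t)$ because $c>0$; for $c<0$ it would give $-\mu(-\infty)(H(t)-1)$. This is where the sign of $c$ really enters: existence of $\mu(-\infty)$ and the decay estimate only need $b>0$. Note also that the last integral converges absolutely on $\Re s=c$ only if $\mu(u)-\mu(-\infty)=o(e^{c'u})$ for some $c'\in(c,b)$, which you get from convergence on $\Re s=c'$. The bound $o(e^{cu})$ taken from the line $\Re s=c$ itself only makes the integrand $o(1)$.

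The bookkeeping at $t=0$ is also off. $\mu_1$ is not normalized at $0$, so Step~2 actually returns $\mu(0^+)/2$ there, not $\mu_1(0)=0$. The corrected Step~3 returns $\mu(0^-)/2-\mu(-\infty)$ at $t=0$. The sum is $-\mu(-\infty)=\mu(0)-\mu(-\infty)$, because normalization forces $\mu(0^+)+\mu(0^-)=0$. So your final formula holds at $t=0$, but both intermediate values as you state them are off by $\pm\mu(0^+)/2$. With these two corrections the argument is complete.
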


 \begin{proof}
     See Widder \cite{Widder}, Chapter VI, Theorem 5b.
 \end{proof}

 \begin{lemma}[Order on vertical lines]
 \label{lem:order}

 Let $F(s)=\int_{-\infty}^{+ \infty}e^{-st} \,d\mu(t)$ have a vertical strip of convergence $\Omega_{a,b}, {\rm with}  - \infty < a<b < + \infty$. For every $\epsilon>0$ and $0<\delta<\frac{|b-a|}{2},$ there exists a positive constant $Y$ such that, for all $a+ \delta \leq x \leq b- \delta$, 
 $\displaystyle{\left| \frac{F(x+iy)}{y}\right|< \epsilon}$
whenever $|y|>Y.$
   
\end{lemma}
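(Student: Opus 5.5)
The plan is to reduce the statement to the classical fact that a Laplace--Stieltjes integral grows at most like $o(|y|)$ on vertical lines of its strip of convergence, made uniform in $x$ by a Phragmén--Lindelöf type argument or by a direct integration by parts. First fix $c_0$ with $a<c_0<a+\delta$ and translate: replacing $d\mu(t)$ by $e^{-c_0 t}d\mu(t)$ moves the strip so that we may assume $a<0<a+\delta$. The integral then converges at $s=c_0$, so $\alpha(t)=\int_0^t e^{-c_0 u}\,d\mu(u)$ is bounded as $t\to+\infty$. Symmetrically, with $c_1\in(b-\delta,b)$, $\beta(t)=\int_0^t e^{-c_1u}\,d\mu(u)$ is bounded as $t\to-\infty$.

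Next split $F(s)=\int_0^{\infty}e^{-st}\,d\mu(t)+\int_{-\infty}^0 e^{-st}\,d\mu(t)=F_+(s)+F_-(s)$ and treat each half the same way. For $F_+$, integrate by parts against $\alpha$:
\[
F_+(s)=\int_0^\infty e^{-(s-c_0)t}\,d\alpha(t)=(s-c_0)\int_0^\infty e^{-(s-c_0)t}\alpha(t)\,dt ,
\]
which is valid for $\Re s>c_0$ because $\alpha$ is bounded. Since $\alpha$ is bounded and $\Re(s)-c_0\ge a+\delta-c_0=:\eta>0$ in the closed strip, the integral $G_+(s)=\int_0^\infty e^{-(s-c_0)t}\alpha(t)\,dt$ converges absolutely and uniformly. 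By the Riemann--Lebesgue lemma, $G_+(x+iy)\to0$ as $|y|\to\infty$. We need this uniformly for $x\in[a+\delta,b-\delta]$. To get it, note that the family $\{e^{-(x-c_0)t}\alpha(t)\}_x$ is dominated by $\|\alpha\|_\infty e^{-\eta t}\in L^1$ and is continuous in $x$ in $L^1$-norm. A compact family in $L^1$ has Fourier transforms vanishing uniformly at infinity; this follows from an $\epsilon/3$ argument over a finite $L^1$-net. Hence $|F_+(x+iy)|\le |x+iy-c_0|\,|G_+(x+iy)|=o(|y|)$ uniformly. The same argument applies to $F_-$ with $\beta$ and $c_1$, using $c_1-x\ge\eta'>0$.

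Combining the two halves gives $|F(x+iy)/y|<\epsilon$ for $|y|>Y$ uniformly in $x$. The main obstacle is this uniformity step. The substitute for it is Phragmén--Lindelöf: the bound $F=O(|y|)$ holds uniformly, and $F=o(|y|)$ holds on the two boundary lines $x=a+\delta$ and $x=b-\delta$. Applying Phragmén--Lindelöf to $F(s)/(s-s_0)$ on the strip then gives the uniform $o(|y|)$ in the interior as well. I would present the $L^1$-compactness argument, since it is elementary, and mention Widder's Chapter VI as the classical source.
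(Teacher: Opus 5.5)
Your proof is correct. It is essentially the classical argument behind the paper's citation of Widder; the paper itself gives no proof. That argument integrates by parts on each one-sided half to factor out $s-c_0$ (resp.\ $s-c_1$) against a bounded function, then applies a Riemann--Lebesgue estimate that is uniform in $x\in[a+\delta,b-\delta]$. Your $L^1$-compactness device for that uniformity is sound: the map $x\mapsto e^{-(x-c_0)t}\alpha(t)\mathbf{1}_{t\ge 0}$ is in fact Lipschitz into $L^1$ with constant $\|\alpha\|_\infty/\eta^2$. So the Phragm\'en--Lindel\"of alternative you sketch is not needed.
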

\begin{proof}
    See Widder \cite{Widder}, Chapter II, section 13; and Hille \cite{HilleExtremal} for the sharpness of the bound.
\end{proof}

We will need the version of the classical Phragmén-Lindelöf theorem for vertical half-strips.

\begin{theorem}[Phragmén-Lindelöf]\label{thm:lindaloffi} Let $D$ be a vertical half-strip defined by the points of $\Omega_{a,b}$ lying either above or below the line $\Im(s)=d.$ Let $F(s)$ be holomorphic in $D$ and continuous up to its boundary. Assume that \( |F(s) | \leq M\) on boundary lines. If there exist positive constants $A,$ $B$ and $0<K< \frac{\pi}{|b-a|}$ such that for all $s$ inside $D$ we have \[ \label{eq:prePL}|F(x+iy)| \leq Ae^{Be^{K|y|}  } , \] 
then $|F(s)| \leq M$ on $\overline{D}.$

\end{theorem}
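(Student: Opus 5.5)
The plan is to reduce the half-strip to the standard horizontal strip and then run the classical Phragmén--Lindelöf argument with an auxiliary function of double-exponential decay. Treat the upper half-strip $D=\{a<x<b,\ y>d\}$; the lower one is symmetric, via $s\mapsto\bar s$ applied to $\overline{F(\bar s)}$. Rotate and translate with
\[
w=-i\bigl(s-\tfrac{a+b}{2}-id\bigr),
\]
so that $D$ becomes the horizontal half-strip
\[
\{\,w=u+iv:\ u>0,\ |v|<h\,\}, \qquad h=\tfrac{b-a}{2},
\]
with $u=y-d$ and $v=-(x-\tfrac{a+b}{2})$. Set $G(w)=F(s)$. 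The hypotheses become $|G|\le M$ on the boundary (the segment $u=0$ and the two rays $v=\pm h$), and
\[
|G(w)|\le A'e^{B' e^{K u}}, \qquad A'=A,\quad B'=Be^{K|d|},
\]
with $K<\pi/(2h)$.

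Next, choose $K<\kappa<\pi/(2h)$ and, for $\varepsilon>0$, define
\[
G_\varepsilon(w)=G(w)\exp\bigl(-\varepsilon\cosh(\kappa w)\bigr).
\]
The key computation is
\[
\Re\cosh(\kappa w)=\cosh(\kappa u)\cos(\kappa v)\ge \cosh(\kappa u)\cos(\kappa h),
\]
and $\cos(\kappa h)>0$ because $\kappa h<\pi/2$. Hence
\[
|G_\varepsilon(w)|\le A'\exp\bigl(B'e^{Ku}-\varepsilon\cos(\kappa h)\cosh(\kappa u)\bigr)\longrightarrow 0 \quad\text{as } u\to\infty,
\]
since $\kappa>K$. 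On the boundary, $|G_\varepsilon|\le M$, because the multiplier has modulus at most $1$ there.

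Now apply the maximum principle on the truncated rectangles $\{0<u<U,\ |v|<h\}$. Continuity of $F$ up to the boundary makes this legitimate. For $U$ large, $|G_\varepsilon|\le M$ on the far side $u=U$, so $|G_\varepsilon|\le M$ on the whole closed half-strip. Letting $\varepsilon\to0$ pointwise gives $|G|\le M$, hence $|F|\le M$ on $\overline D$.

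The main obstacle is choosing the auxiliary function so that two requirements hold at once. It must decay fast enough, doubly exponentially with rate strictly greater than $K$, to beat the growth hypothesis. It must also keep a positive real part of $\cosh$ across the full width of the strip. The condition $K<\pi/|b-a|$ is exactly what makes room for such a $\kappa$. Everything else is the routine maximum principle on rectangles.
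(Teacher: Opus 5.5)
Your proof is correct and is essentially the classical argument the paper defers to (Titchmarsh, Ch.~V; Cartwright, Ch.~III). In the original coordinates your multiplier $\exp(-\varepsilon\cosh(\kappa w))$ is just $\exp\bigl(-\varepsilon\cos(\kappa(s-\tfrac{a+b}{2}-id))\bigr)$, with $\kappa$ chosen strictly between $K$ and $\pi/|b-a|$, exactly as in the textbook proof. One cosmetic point: if $M=0$, the far-side estimate should read $|G_\varepsilon|\le M+\eta$, and you then let $\eta\to 0$.
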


\begin{proof}
   See Titchmarsh \cite{TitchmarshFunction}, Chapter V,  or Cartwright \cite{Cartwright}, Chapter III.
\end{proof} 

 \begin{definition}[Phragm\'{e}n-Lindel\"{o}f condition] \label{def:PL}
     
    A function $F(s)$ meromorphic  
     on a vertical strip $\Omega_{a,b}$ and continuous on the boundary is said to satisfy the {\em Phragm\'{e}n-Lindel\"{o}f condition} if there exist positive constants $A,$ $B,$ $Y$ and $0<K<\frac{\pi}{|b-a|}$  such that, for all $a<x<b$,
     \begin{equation}
     \label{eq_PL}
         |F(x+iy)| \leq Ae^{Be^{K|y|}} 
     \end{equation}
     whenever $|y| \geq Y.$
 \end{definition}

\begin{remark}\label{def:containnopoles}
    Notice that the poles of a meromorphic function satisfying the Phragmèn--Lindelöf condition \eqref{eq_PL} must lie in the region defined by $a< \Re(s)<b$ and $|\Im(s)| < Y.$
\end{remark}

\section{The transition formula}
   \label{sec_transition_formula}

From this point onwards we suppose the function \( F(s) \) is represented by a Laplace pair \( (\mu_l,\mu_r) \) on a vertical strip \( \Omega_{a,b} \) with \( a < 0 < b \), where the imaginary axis \( \Re ( s) = 0 \) is the separatrix. This choice is notationally convenient and clearly implies no loss of generality, since a Laplace pair with separatrix \( \Re (s) = c \) reduces to this setting after a translation of the determining measures by the factor \( e^{-ct} \).

We employ a classical Laplace inversion method to derive sufficient conditions ensuring that \( \mu_r(t) = \mu_l(t) \) for all \( t \) when \( F \) has no singularities on the imaginary axis.

\begin{lemma}\label{lem:prehol}
     Let $F(s)$ be a holomorphic Laplace pair $(\mu_l,\mu_r)$ satisfying the Phragm\'{e}n-Lindel\"{o}f condition \eqref{eq_PL} and with no singularities on the separatrix. Let 
          $0<\delta<\min \{|a|, |b|, 1 \}.$ Then there exist positive constants $R_0$ and $M$ such that 
     \begin{equation}
         \label{eq:quotientbound}
              \sup_{|x| \leq \delta} \Bigg|\frac{F(x \pm iR)}{x \pm iR} \Bigg| \leq M    
     \end{equation}
                     for all $R \geq R_0$.
           \end{lemma}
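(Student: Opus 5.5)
We will bound $F(s)/s$ on the thin vertical strip $|x|\le\delta$, which straddles the separatrix, by applying Phragmén--Lindelöf (Theorem \ref{thm:lindaloffi}) to $G(s)=F(s)/s$ on half-strips of a slightly wider strip. The point is that Lemma \ref{lem:order} gives vertical growth control only strictly inside each strip of convergence, hence away from the imaginary axis. The holomorphy and Phragmén--Lindelöf condition of $F$ are what let us cross the axis.

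First we fix $\delta'$ with $\delta<\delta'<\min\{|a|,|b|\}$, so that the lines $\Re(s)=\pm\delta'$ lie inside $\Omega_{a,0}$ and $\Omega_{0,b}$ respectively. We apply Lemma \ref{lem:order} to $F=\mathcal{L}(\mu_r)$ on $\Omega_{0,b}$, with $\epsilon=1$ and with the margin chosen so that the line $x=\delta'$ is covered. This gives $Y_1$ with $|F(\delta'+iy)|\le |y|$ for $|y|>Y_1$. Applying the lemma to $\mathcal{L}(\mu_l)$ on $\Omega_{a,0}$ gives the same bound on $x=-\delta'$ for $|y|>Y_2$. (If $a$ or $b$ is infinite, we first restrict to a finite substrip containing $\pm\delta'$; this is harmless.) Hence on both lines $|F(s)/s|\le |y|/|s|\le 1$ once $|y|$ is large.

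Next we take $d\ge\max\{Y,Y_1,Y_2,1\}$, where $Y$ comes from the Phragmén--Lindelöf condition \eqref{eq_PL}, and consider the upper half-strip $D=\{|x|<\delta',\ y>d\}$; the lower half-strip is symmetric. On $D$, $G=F/s$ is holomorphic, since $F$ has no poles on the separatrix and by Remark \ref{def:containnopoles} all remaining candidates lie at $|y|<Y$. Also $s\neq0$ there. $G$ is continuous on $\overline D$. Its bound on the two vertical sides is $1$. On the horizontal segment $y=d$, $|x|\le\delta'$, it is bounded by $M_0=\max|F|/d$, which is finite by compactness and continuity.

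For the growth hypothesis, $|G|\le |F|\le Ae^{Be^{K|y|}}$ on $D$ because $|s|\ge d\ge1$. Here $K<\pi/(b-a)$, and we need $K<\pi/(2\delta')$. This holds because $2\delta'<\min\{|a|,|b|\}\cdot 2\le b-a$, so $\pi/(b-a)\le \pi/(2\delta')$. Theorem \ref{thm:lindaloffi} then gives $|G|\le M:=\max\{1,M_0\}$ on $\overline D$, and in particular on $|x|\le\delta$ for $y=R\ge R_0:=d$. The lower half-strip is handled the same way, and we take the larger constant.

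The main obstacle is the first step: making sure Lemma \ref{lem:order} genuinely yields a uniform bound on the lines $x=\pm\delta'$ from the two separate representations, including the bookkeeping when $a$ or $b$ is infinite. A second point to check is that the Phragmén--Lindelöf exponent $K$ is admissible on the narrower strip. This works because narrowing the strip only enlarges the allowed range of $K$.
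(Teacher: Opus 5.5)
Your proposal is correct and follows essentially the same route as the paper: you bound $F(s)/s$ on the vertical lines via Lemma \ref{lem:order}, bound it on a horizontal segment by compactness, and conclude with Phragmén--Lindelöf on the upper and lower half-strips. Working on the slightly wider strip $|x|<\delta'$ and explicitly checking $K<\pi/(2\delta')$ and $|s|\ge 1$ is harmless extra care that the paper leaves implicit.
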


\begin{proof}
   By Lemma \ref{lem:order} it follows that $F(s)/s$ is bounded by some constant $M_{\delta}>0$ on the vertical lines \( |\Re(s)|= \delta. \)
    Since \( F(s)/s\) satisfies the Phragmén--Lindelöf condition, 
    we may choose $R_0>0$ such that the bound \eqref{eq_PL} holds for \(F(s)/s\) whenever $|y| \geq R_0.$ 
    
    Let $\displaystyle{M_0=\sup_{|x| \leq \delta} \Bigg|\frac{F(x \pm iR_0)}{x \pm iR_0} \Bigg|.}$ 
Setting \(M= \max \{M_0, M_{\delta} \},\) it follows that \(F(s)/s\) is bounded by $M$ on the boundary of the vertical half-strips delimited by \(|\Re(s)|< \delta\) and \(|\Im(s)|>R_0\). Applying Theorem \ref{thm:lindaloffi} to \(F(s)/s\) on both regions we obtain \eqref{eq:quotientbound} for all $R \geq R_0,$ as desired.

\end{proof}

\begin{theorem}\label{thm:hol}
    Let $F(s)$ be a holomorphic Laplace pair $(\mu_l,\mu_r)$ satisfying the conditions of Lemma \ref{lem:prehol}.
    Then $\mu_l=\mu_r$ for all  $t \in \mathbb{R}.$
\end{theorem}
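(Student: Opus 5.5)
We use the inversion formula of Lemma \ref{lem:inv} on the two vertical lines $\Re(s)=\delta$ and $\Re(s)=-\delta$, with $0<\delta<\min\{|a|,|b|,1\}$ as in Lemma \ref{lem:prehol}, and then shift contours across the imaginary axis. Since $F$ is holomorphic on $\Omega_{a,b}$, the only singularity of $F(s)e^{st}/s$ in the closed strip $|\Re(s)|\le\delta$ is the simple pole at $s=0$, with residue $F(0)$. Fix $t\in\mathbb{R}$. For $R$ large, Cauchy's theorem on the rectangle with vertices $\pm\delta\pm iR$ gives
\[
\frac{1}{2\pi i}\int_{\delta-iR}^{\delta+iR}\frac{F(s)}{s}e^{st}\,ds-\frac{1}{2\pi i}\int_{-\delta-iR}^{-\delta+iR}\frac{F(s)}{s}e^{st}\,ds
= F(0)+\frac{1}{2\pi i}\Big(\int_{\delta+iR}^{-\delta+iR}-\int_{\delta-iR}^{-\delta-iR}\Big)\frac{F(s)}{s}e^{st}\,ds .
\]

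Next we handle the horizontal segments. By Lemma \ref{lem:prehol}, $|F(x\pm iR)/(x\pm iR)|\le M$ uniformly for $|x|\le\delta$ and $R\ge R_0$. Hence each horizontal integral is bounded by $2\delta M e^{\delta|t|}/(2\pi)$. This bound is uniform in $R$, but it does not tend to zero. This is the main obstacle: mere boundedness of $F(s)/s$ does not kill the horizontal contributions.

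Two remedies are available.

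\textbf{First remedy.} Exploit the freedom in $\delta$. The left-hand side converges as $R\to\infty$ to $\mu_r(t)-\mu_r(-\infty)-\mu_l(t)+\mu_l(+\infty)$ by Lemma \ref{lem:inv}, and this limit is independent of $\delta$. Consequently the limit of the horizontal terms exists and is independent of $\delta$, while its modulus is at most $\delta M_\delta e^{\delta|t|}/\pi$. The catch is that $M$ depends on $\delta$ through $M_\delta$ from Lemma \ref{lem:order}. Lemma \ref{lem:order} gives $F(s)/s\to0$ uniformly on closed substrips, but the strip $|x|\le\delta$ still needs the Phragmén–Lindelöf step. For fixed $\delta_0$, though, Lemma \ref{lem:prehol} gives one $M$ valid on $|x|\le\delta_0$, and that same $M$ works for every smaller $\delta$, since we are simply integrating over a shorter segment. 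Letting $\delta\to0$ then shows the limit is $0$.

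\textbf{Second remedy.} Apply Phragmén–Lindelöf to $F(s)/s$ with boundary values tending to $0$, namely $\sup_{|y|\ge R}|F(\pm\delta+iy)/(\pm\delta+iy)|\to0$ by Lemma \ref{lem:order}. This yields $F(x+iR)/(x+iR)\to0$ uniformly in $|x|\le\delta$. By dominated convergence the horizontal integrals then vanish. Either way we obtain
\[
\mu_r(t)-\mu_l(t)=\mu_r(-\infty)-\mu_l(+\infty)+F(0)
\]
for all $t$.

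Finally, the difference $\mu_r-\mu_l$ is therefore a constant $C$. Both functions are normalized, so $\mu_r(0)=\mu_l(0)=0$, which forces $C=0$. Hence $\mu_l=\mu_r$ for all $t$. The existence of the limits $\mu(\pm\infty)$ is part of the content of Lemma \ref{lem:inv}, since the integrals converge at the points $\pm\delta$, which lie on opposite sides of $0$.
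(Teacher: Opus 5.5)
Your proposal is correct and is essentially the paper's proof: the rectangle $\Gamma_{R,\delta}$, Lemma \ref{lem:inv} on $\Re(s)=\pm\delta$, the residue $F(0)$, horizontal edges bounded via Lemma \ref{lem:prehol} and made small by shrinking $\delta$, and finally normalization at $t=0$. Your first remedy spells out a step the paper leaves implicit, since the paper chooses $\delta$ in terms of an $M$ that itself depends on $\delta$: you fix $\delta_0$ so one $M$ serves every $\delta\le\delta_0$, and you note that the $R\to\infty$ limit of the horizontal terms is independent of $\delta$. The second remedy is unnecessary, and as stated it would need a stronger Phragm\'{e}n--Lindel\"{o}f variant than Theorem \ref{thm:lindaloffi}, because the bottom edge of the half-strip is not small.
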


\begin{proof}
    
  Fix \( t \in \mathbb{R} \). Let \( R>0 \) and \( 0<\delta<\min\{|a|,|b|\} \), and let \( \Gamma_{R,\delta} \) denote the positively oriented rectangular contour with vertices at \( \pm\delta \pm iR \) depicted in Figure 1 below.

\begin{center}
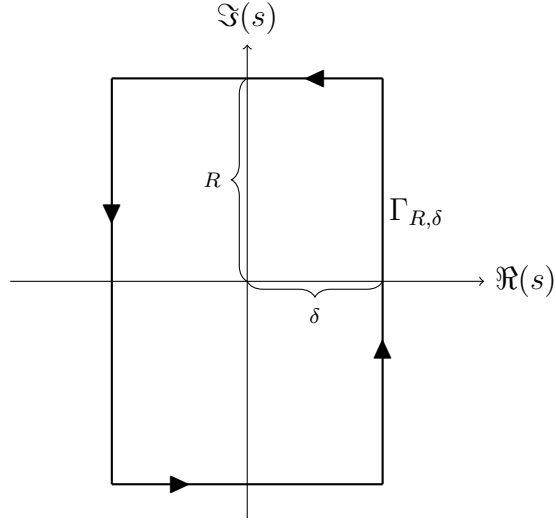

\begin{tikzpicture}[scale=0.9]
  \draw[->] (-3.5,0) -- (3.5,0) node[right] {$\Re(s)$};
  \draw[->] (0,-3.5) -- (0,3.5) node[above]{$\Im(s)$}; 
  \node at (-4,0) {\textcolor{white}{-}};
  \draw[thick] (2,-3) -- (2,3);
  \draw[thick] (2,3) -- (-2,3);
  \draw[thick] (-2,3) -- (-2,-3);
  \draw[thick] (-2,-3) -- (2,-3);
  \draw[decorate,decoration={brace,mirror,amplitude=6pt}] (0,0) -- (2,0)
    node[midway,below=6pt]{\scriptsize$\delta$};
  \draw[decorate,decoration={brace,mirror,amplitude=6pt}] (0,3) -- (0,0)
    node[midway,left=6pt] {\scriptsize$R$};
  \node at (2.5,1) {$\Gamma_{R, \delta}$};
  \node at (2, -1) {$\blacktriangle$}; 
  \node at (1, 3) {\rotatebox{90}{$\blacktriangle$}}; 
  \node at (-2, 1) {\rotatebox[origin=c]{180}{$\blacktriangle$}}; 
  \node at (-1, -3) {\rotatebox[origin=c]{270}{$\blacktriangle$}}; 
\end{tikzpicture}
\captionof{figure}{Contour $\Gamma_{r, \delta}$}
\end{center}

We consider the contour integral $\displaystyle{\frac{1}{2\pi i}\int_{\Gamma_{R,\delta}} e^{st}\frac{F(s)}{s}\,ds.}$
To prove the result it suffices to show that, for a suitable choice of $\delta,$ the contributions of the upper and lower horizontal edges of \( \Gamma_{R,\delta} \) become arbitrarily small for all sufficiently large $R.$ 
Indeed, under this assumption, the inversion formula \eqref{eq:inv} together with the residue theorem yields
\begin{equation}\label{eq:babycontour}
\lim_{R\to+\infty}\frac{1}{2\pi i}\int_{\Gamma_{R,\delta}} e^{st}\frac{F(s)}{s}\,ds
= \mu_r(t)-\mu_l(t)+\mu_l(+\infty)-\mu_r(-\infty)=F(0).
\end{equation}

\noindent Recalling the normalization of the determining measures (definition \ref{def_normalized}), 
evaluation of \eqref{eq:babycontour} at \( t=0 \) shows that
\( \mu_r(-\infty)-\mu_l(+\infty) \) equals the residue contribution $F(0)$. Substituting this back into
\eqref{eq:babycontour} yields \( \mu_r(t)=\mu_l(t) \) for all \( t \), proving the statement of the theorem.

It only remains to show the vanishing of the horizontal integrals, which we do next.

The contribution of each horizontal edge is bounded by
\begin{equation}\label{eq:edgebound}
\left|\int_{-\delta}^{\delta} e^{(x\pm iR)t}\frac{F(x\pm iR)}{x\pm iR}\,dx\right|
\le 2\delta\, e^{\delta|t|} \sup_{ |x| \leq \delta} \Bigg | \frac{F(x \pm iR)}{x \pm iR} \Bigg |.
\end{equation}
Choosing $R_0>0$ as in Lemma \ref{lem:prehol}, it follows that there exists some $M>0$ such that $\displaystyle{\sup_{ |x| \leq \delta} \Bigg | \frac{F(x \pm iR)}{x \pm iR} \Bigg | \leq M}$ whenever $R \geq R_0.$ The right hand side of \eqref{eq:edgebound} becomes bounded by $2 \delta e^{\delta |t|}M.$
Given \( \varepsilon>0 \), choose
\( \delta=\varepsilon/(4e^{h|t|}M) \).
This ensures that the upper and lower integrals \eqref{eq:edgebound} are smaller than \( \varepsilon \) for all sufficiently large $R,$ completing the proof.
\end{proof}

As stated in the Introduction, the inversion argument used by Van der Pol \cite{VanderPol} rested on the assumption that \(F(s)/s\) vanishes uniformly inside the whole strip \(\Omega_{a,b}\) as \( |\Im(s)| \to + \infty.\) The Phragmén-Lindelöf condition \eqref{eq_PL} is the precise analytic mechanism that guarantees that this occurs. As will be shown in Section \ref{sec_ghosts}, if condition \eqref{eq_PL} is relaxed the growth of $F(s)$ on the separatrix may prevent the uniform vanishing of the horizontal edges of the contour, producing examples where $\mu_r \neq \mu_l.$ 

The 
following result generalizes Van der Pol's transition formula to the case of a finite number of poles of arbitrary order.

\begin{corollary}\label{cor:finitepoles}
Let \( F(s) \) be a meromorphic Laplace pair \( (\mu_l,\mu_r) \) having finitely many poles  on the imaginary axis 
\( s=ip_n \), \( 1\le n\le N \), and satisfying the Phragmén--Lindelöf condition \eqref{eq_PL}. Then for each real \( t \)
\begin{equation}
    \label{eq:finite_transition_formula}
\mu_r(t)-\mu_l(t) 
=
\sum_{n=1}^{N}
\Res\!\left((e^{st}-1)\frac{F(s)}{s},\, s=ip_n\right).
\end{equation}

\end{corollary}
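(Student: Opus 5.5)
We run the contour argument of Theorem \ref{thm:hol} again. The only new ingredient is the residue theorem applied at the poles $ip_n$, together with the pole at $s=0$ coming from the factor $1/s$. Fix $t\in\mathbb{R}$ and $0<\delta<\min\{|a|,|b|,1\}$, and let $\Gamma_{R,\delta}$ be the rectangle with vertices $\pm\delta\pm iR$. By Remark \ref{def:containnopoles}, every pole satisfies $|p_n|<Y$. Hence for $R>\max\{Y,R_0\}$ the rectangle encloses all the poles $ip_n$, and possibly $s=0$ as well, for the integrand $e^{st}F(s)/s$.

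The first step is to bound the horizontal edges. Lemma \ref{lem:prehol} assumed no singularities on the separatrix, but its proof goes through unchanged on the half-strips $|\Re(s)|<\delta$, $|\Im(s)|>R_0$ once $R_0\ge Y$. There $F(s)/s$ is holomorphic. It is bounded on the lines $|\Re(s)|=\delta$ by Lemma \ref{lem:order}, and on the segment $|\Im(s)|=R_0$ by compactness and continuity. Theorem \ref{thm:lindaloffi} then gives a uniform bound $M$ on $F(s)/s$. So each horizontal edge contributes at most $2\delta e^{\delta|t|}M$. This quantity does not depend on $R$, and it tends to $0$ as $\delta\to0$.

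Next we apply the residue theorem and Lemma \ref{lem:inv}. Together they give, for each such $\delta$,
\[
\lim_{R\to\infty}\frac{1}{2\pi i}\int_{\Gamma_{R,\delta}}e^{st}\frac{F(s)}{s}\,ds=\mu_r(t)-\mu_l(t)+\mu_l(+\infty)-\mu_r(-\infty)+E_\delta(t),
\]
where $|E_\delta(t)|\le 4\delta e^{\delta|t|}M/(2\pi)$. The left-hand side equals $\sum_{w}\Res(e^{st}F(s)/s,\,w)$, the sum running over $w\in\{ip_1,\dots,ip_N\}\cup\{0\}$. This residue sum does not depend on $\delta$, and neither does the left side of the limit. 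So letting $\delta\to0$ forces $E_\delta\to 0$, and we obtain an exact identity:
\[
\mu_r(t)-\mu_l(t)+\mu_l(+\infty)-\mu_r(-\infty)=\sum_{w}\Res\Big(e^{st}\frac{F(s)}{s},w\Big).
\]

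Finally we evaluate at $t=0$. Since the measures are normalized, $\mu_r(0)=\mu_l(0)=0$, which gives
\[
\mu_l(+\infty)-\mu_r(-\infty)=\sum_w\Res\Big(\frac{F(s)}{s},w\Big).
\]
Subtracting this from the previous identity yields
\[
\mu_r(t)-\mu_l(t)=\sum_w\Res\Big((e^{st}-1)\frac{F(s)}{s},w\Big).
\]
At $w=0$ the function $(e^{st}-1)/s$ is entire. So if $0$ is not a pole of $F$, that residue vanishes, and if $0=ip_n$ for some $n$, it is already included in the sum. This gives \eqref{eq:finite_transition_formula}.

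The main obstacle is justifying that the Phragmén--Lindelöf bound on the horizontal edges survives near the poles. This is exactly why we take $R_0\ge Y$ via Remark \ref{def:containnopoles}: the half-strips then avoid the poles altogether.
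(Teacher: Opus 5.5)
Your proposal is correct and follows the paper's proof essentially step for step: the same rectangle $\Gamma_{R,\delta}$, the Phragm\'{e}n--Lindel\"{o}f bound on the horizontal edges (valid once $R_0\ge Y$ by Remark \ref{def:containnopoles}), the residue theorem combined with Lemma \ref{lem:inv}, and evaluation at $t=0$ to eliminate the constant. Your handling of the constant $\mu_l(+\infty)-\mu_r(-\infty)$ and of a possible pole of $F$ at $s=0$ is more careful than the paper's display \eqref{eq:finitepoleseq}; the one point to state explicitly is that $2\delta e^{\delta|t|}M\to0$ requires $M$ independent of $\delta$, which you get by running the Phragm\'{e}n--Lindel\"{o}f argument once on the half-strips $|\Re(s)|<\delta_0$, $|\Im(s)|>R_0$ for a fixed $\delta_0$ and then restricting to $\delta\le\delta_0$.
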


\begin{proof}

Proceeding as in the proof of Theorem \ref{thm:hol}, the estimates \eqref{eq:quotientbound} and \eqref{eq:edgebound} remain valid since none of the half-strips contain any poles by Remark \ref{def:containnopoles}. Therefore the same contour argument applies with the only modification that the poles \( s=ip_n \) now lie inside the contour.
By the residue theorem,
\begin{equation}\label{eq:finitepoleseq}
\mu_r(t)-\mu_l(t)+\mu_r(\infty)-\mu_l(\infty)
=
\sum_{n=1}^{N}
\Res\!\left(e^{st}\frac{F(s)}{s},\, s=ip_n\right)
+F(0).
\end{equation}

\noindent Evaluating \eqref{eq:finitepoleseq} at \( t=0 \) and using the normalization of the determining measures, we obtain
\[
\mu_r(-\infty)-\mu_l(+\infty)
=
\sum_{n=1}^{N}
\Res\!\left(\frac{F(s)}{s},\, s=ip_n\right)
+F(0).
\]
Subtracting this identity from \eqref{eq:finitepoleseq} yields \eqref{eq:finite_transition_formula}, as claimed.
\end{proof}

The transition formula \eqref{eq:finite_transition_formula} clarifies the analytic phenomena governing the change of determining measures as a Laplace representation crosses adjacent strips of definition. However, it is generally not convenient for explicit computations, particularly in the presence of poles of high order. Moreover, many of the most relevant examples in practice admit classical Laplace representations by locally integrable functions. For this reason, we derive a more explicit expression for the difference of determining functions in the case where $F(s)$ defines a Laplace pair $(f_l,f_r)$, which is more amenable to computation than \eqref{eq:finite_transition_formula}. It also generalizes the transition formula in \cite{BuescuPolar} which was obtained in the context of polar functions.

\begin{lemma}\label{lem:transitionforfinite}
Let $F(s)$ be a meromorphic Laplace pair $(f_l,f_r)$ satisfying the hypotheses of Corollary~\ref{cor:finitepoles}. 
For each pole $s=ip_n$, let $r_n$ denote its order and let
$\displaystyle{P_n(s)= \sum_{k=1}^{r_n}\frac{a_{n,k}}{(s-ip_n)^k}}$
be the principal part of its Laurent expansion around $s=ip_n$. Then
\begin{equation}\label{eq:transitionformula}
f_r(t)-f_l(t)= \sum_{n=1}^{N}e^{ip_nt}\sum_{k=1}^{r_n}a_{n,k}\frac{t^{k-1}}{(k-1)!}.
\end{equation}
\end{lemma}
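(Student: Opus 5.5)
The plan is to start from the measure-level transition formula \eqref{eq:finite_transition_formula} of Corollary~\ref{cor:finitepoles}, with $\mu_{l}(t)=\int_0^t f_l(u)\,du$ and $\mu_{r}(t)=\int_0^t f_r(u)\,du$. Both are normalized, so the corollary gives, for every real $t$,
\[
\int_0^t \bigl(f_r(u)-f_l(u)\bigr)\,du=\sum_{n=1}^{N}\Res\!\left((e^{st}-1)\frac{F(s)}{s},\, s=ip_n\right).
\]
I will compute the right-hand side explicitly as a function $G(t)$. Then I will show that $G(t)=\int_0^t g(u)\,du$, where $g$ is the right-hand side of \eqref{eq:transitionformula}. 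Since the determining functions are only defined up to a.e.\ equality (and normalization), this identifies $f_r-f_l$ with $g$. The identification holds pointwise wherever $f_r-f_l$ is continuous, and in the normalized sense in general.

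The key observation is that only the principal part $P_n$ contributes to the residue at $ip_n$. Write $F=P_n+H_n$ with $H_n$ holomorphic near $ip_n$. Then I consider two cases.

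\emph{Case $p_n\neq 0$.} Here $(e^{st}-1)/s$ is holomorphic at $ip_n$, so the residue of $(e^{st}-1)H_n(s)/s$ vanishes. For the principal part,
\[
\Res\Big((e^{st}-1)s^{-1}P_n(s),ip_n\Big)=\sum_k a_{n,k}\frac{1}{(k-1)!}\frac{d^{k-1}}{ds^{k-1}}\Big[\frac{e^{st}-1}{s}\Big]_{s=ip_n}.
\]

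\emph{Case $p_n=0$.} Here $(e^{st}-1)/s$ is still entire, because the removable singularity at $0$ cancels the $1/s$. This case is exactly why the factor $e^{st}-1$, rather than $e^{st}$, appears, and the same formula applies.

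Rather than expanding these derivatives directly, I will use the cleaner route of differentiating in $t$. The function $\phi(s,t)=(e^{st}-1)/s$ is entire in $s$ and satisfies $\partial_t\phi=e^{st}$. The residue is a finite linear combination of $s$-derivatives at a fixed point, so differentiation in $t$ commutes with it. This gives
\[
G'(t)=\sum_n\Res\big(e^{st}P_n(s),ip_n\big)=\sum_n\sum_k a_{n,k}\frac{1}{(k-1)!}\frac{d^{k-1}}{ds^{k-1}}e^{st}\Big|_{ip_n}=\sum_n e^{ip_nt}\sum_k a_{n,k}\frac{t^{k-1}}{(k-1)!}=g(t).
\]
In addition, $G(0)=0$, since $\phi(s,0)\equiv 0$. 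Hence $G(t)=\int_0^t g$, which completes the identification described above.

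The main obstacle is the final step: passing from equality of the integrals $\int_0^t(f_r-f_l)=\int_0^t g$ for all $t$ to the pointwise statement \eqref{eq:transitionformula}. I would handle it by Lebesgue differentiation, which gives $f_r-f_l=g$ almost everywhere. Since $g$ is continuous (indeed entire in $t$), this equality holds everywhere once $f_r-f_l$ is taken as its normalized representative, in keeping with the paper's convention of identifying the determining function $f$ with $\mu$. A secondary point to check is the justification for interchanging $\partial_t$ with the residue. This is immediate because each residue is the coefficient extraction $\frac{1}{2\pi i}\oint$ over a small fixed circle around $ip_n$, and $\phi$ is smooth in $t$ uniformly on that circle.
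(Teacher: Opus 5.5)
Your proof is correct. It shares the paper's skeleton: start from Corollary~\ref{cor:finitepoles}, differentiate in $t$, and note that only the principal parts $P_n$ contribute. The difference is the order of operations.

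\begin{itemize}
\item \textbf{The paper's route.} It first computes $\Res\bigl(e^{st}F(s)/s,\, ip_n\bigr)$ explicitly, by multiplying the Laurent expansions of $e^{st}$, $1/s$ and $F(s)$ about $ip_n$, which gives \eqref{eq:residuecomputation}. Only then does it differentiate in $t$, where a telescoping cancellation leaves the single term $t^{k-1}/(k-1)!$.
\item \textbf{Your route.} You differentiate under the residue using $\partial_t\bigl((e^{st}-1)/s\bigr)=e^{st}$, so the factor $1/s$ disappears. The residue of $e^{st}P_n(s)$ is then read off as $\frac{1}{(k-1)!}\partial_s^{k-1}e^{st}\big|_{s=ip_n}$. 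Together with $G(0)=0$, this gives $G(t)=\int_0^t g$.
\end{itemize}

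Your route gains three things. It is shorter. It is uniform in $p_n$: the paper's expansion $1/s=\sum_m (-1)^m (s-ip_n)^m/(ip_n)^{m+1}$ needs $p_n\neq 0$, so its computation does not literally cover a pole at the origin. That is exactly the case used for $\zeta(s)/s$ in Example~\ref{ex_zeta_function}, and, after the normalizing translation, for any real pole. Since $(e^{st}-1)/s$ is entire, you need no case distinction. Finally, you are more careful about the conclusion. Equality of primitives only gives $f_r-f_l=g$ almost everywhere, or everywhere after choosing representatives; the paper passes over this by writing $f=\frac{d}{dt}\mu$.

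The paper's route gains the explicit residue formula \eqref{eq:residuecomputation}, which it reuses in Theorem~\ref{thm:infinitepoles}. Your argument supplies what that theorem actually needs, pole by pole:
\[
\Res\Bigl((e^{st}-1)\tfrac{F(s)}{s},\, ip_n\Bigr)=\int_0^t e^{ip_nu}\sum_{k=1}^{r_n} a_{n,k}\frac{u^{k-1}}{(k-1)!}\,du .
\]
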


\begin{proof}
Since $\mu_{r,l}(t)= \int_{0}^{t} f_{r,l}(u)\,du$, Corollary~\ref{cor:finitepoles} yields
\[
f_r(t)-f_l(t)
=
\frac{d}{dt}
\sum_{n=1}^{N}
\Res\!\left(e^{st}\frac{F(s)}{s};\, s=ip_n\right).
\] We proceed with the construction of an explicit expression for 
$\displaystyle{
\Res\!\left(e^{st}\frac{F(s)}{s};\, s=ip_n\right).}$ Expanding the functions $e^{st}$, $1/s$, and $F(s)$ in Laurent series around $s=ip_n$, we obtain
\begin{equation*}
    \label{eq_finite_transition}
e^{st}\frac{F(s)}{s}
=
e^{itp_n}
\left[
\sum_{\substack{l,m\ge 0 \\ k\ge -r_n}}
\frac{(-1)^m}{(ip_n)^{m+1}}
\frac{t^l}{l!}
b_{n,k}
(s-ip_n)^{m+l+k}
\right],
\end{equation*}

\noindent where $b_{n,k}=a_{n,-k}$ and $-r_n\le k\le -1.$ 

The residue at $s=ip_n$ is the coefficient of $(s-ip_n)^{-1}$, and so
\[
\Res\!\left(e^{st}\frac{F(s)}{s},\, s=ip_n\right)
=
e^{itp_n}
\left[
\sum_{\substack{l,m\ge 0 \\ k\ge -r_n \\ m+l+k=-1}}
\frac{(-1)^m}{(ip_n)^{m+1}}
\frac{t^l}{l!}
b_{n,k}
\right].
\]

\noindent Since $l$ and $m$ are non-negative, the condition $m+l+k=-1$ implies
$0\le l\le -k-1$, and therefore it follows that
\begin{equation}\label{eq:residuecomputation}
\Res\!\left(e^{st}\frac{F(s)}{s},\, s=ip_n\right)
=
e^{itp_n}
\left[
\sum_{k=-r_n}^{-1}
\sum_{l=0}^{-k-1}
b_{n,k}
(-1)^{k+l+1}
(ip_n)^{k+l}
\frac{t^l}{l!}
\right].
\end{equation}

\noindent Differentiating with respect to $t$ we obtain
\begin{equation}\label{eq:differentiationofresidues}
\frac{d}{dt}\Res\!\left(e^{st}\frac{F(s)}{s},\, s=ip_n\right)= e^{itp_n}\sum_{k=-r_n}^{-1}
b_{n,k}
\frac{t^{-k-1}}{(-k-1)!}
=
\sum_{k=1}^{r_n}
a_{n,k}
\frac{t^{k-1}}{(k-1)!}.
\end{equation}
Finally, summing over all poles yields \eqref{eq:transitionformula}, completing the proof
\end{proof}

\begin{remark}[The Mellin Transform] It is possible to rephrase our results in terms of the Mellin transform. 
Performing the change of variables $u=e^{-t}$ in the Laplace transform $\mathcal{L}(\mu)$, we obtain the Mellin transform
\[ \mathcal{M}(\beta)(s)= \int_{0}^{+ \infty} t^s \,d\beta(t), \quad \beta(t)=-\mu(\log(1/t)).\] 
Similarly, for $f \in L^1_{\text{loc}}(\mathbb{R}),$ we obtain $\displaystyle{\mathcal{M}(g)(s)=\int_{0}^{+ \infty}t^{s-1}g(t) \,dt},$ with $g(t)=f(\log(1/t)).$ Since the strip of convergence $\Omega_{a,b}$ remains unchanged, we define a \emph{Mellin pair} $(\beta_l, \beta_r)$ or $(g_l, g_r)$ in a completely analogous manner to Definition \ref{def_Laplace_pair}. Indeed, the Mellin pair $(\beta_l, \beta_r)$ corresponds to the Laplace pair $(\mu_l, \mu_r)$ where $\mu_{r,l}(t)=-\beta_{r,l}(e^{-t})$ while the pair $(g_l, g_r)$ corresponds to $(f_l, f_r)$ with $f_{r,l}(t)=g_{r,l}(e^{-t}).$ This allows us to obtain the transition formulas corresponding to \eqref{eq:finite_transition_formula} and \eqref{eq:transitionformula} in terms of Mellin pairs:
\begin{align*}
    &\beta_r(t)-\beta_l(t)=\mu_l(\log(1/t))-\mu_r(\log(1/t))= \sum_{n=1}^{N}\Res \bigg( (1-t^{-s})\frac{F(s)}{s}, \, s=ip_n \bigg ); \\
&g_r(t)-g_l(t)= \sum_{n=1}^{N}t^{-ip_n}\sum_{k=1}^{r_n}a_{n,k}\frac{(\log(1/t))^{k-1}}{(k-1)!}.
\end{align*} 
\end{remark}

\section{The case of infinitely many poles}
\label{sec_infinite_poles}

In this section we study the generalization of the transition formulas \eqref{eq:finite_transition_formula} and \eqref{eq:transitionformula} for a meromorphic function with an infinite number of poles on the separatrix  (note that, by definition of meromorphy, the poles are isolated and do not have a limit point, forming a discrete set.) Throughout this section we assume $F(s)$ to be a Laplace pair $(\mu_l, \mu_r)$ having a countable set of poles on the imaginary axis at $s=ip_n$ where $\{p_n\}_{n \in \mathbb{N}}$ is a sequence of real numbers with no finite accumulation point.  Let $P_n(s)= \sum_{k=1}^{r_n}\frac{
a_{n,k}}{(s-ip_n)^k}$ denote the principal part expansion around $s=ip_n.$ Then \[P(s)= \sum_{n=1}^{+ \infty}\sum_{k=1}^{r_n}\frac{a_{n,k}}{(s-ip_n)^k}= \sum_{n=1}^{+ \infty}P_n(s) \] denotes the formal series of the principal part expansions around each pole.

Some natural difficulties arise when trying to adapt the inversion argument used in the case of a finite number of poles.
In the first place, unless suitable conditions are imposed on the distribution of the poles and on the growth of the corresponding residues, one cannot control the uniform vanishing of the upper horizontal parts of the contour.
Secondly, the sums appearing in the transition formulas \eqref{eq:finite_transition_formula} and \eqref{eq:transitionformula} are replaced by infinite series, raising non-trivial questions of convergence.

We begin by finding a sufficient condition for the series $P(s)$ to define a meromorphic function represented by a Laplace pair on the right and left half-planes. We denote below by $H(t)$ the Heaviside function.

\begin{lemma}\label{lem:polespart}
Let $P(s)$ denote the formal series defined above and $f(t)$ be its formal inverse Laplace transform given by \begin{equation}\label{eq:formalinverse}
    f(t)=\sum_{n=1}^{+\infty} e^{ip_n t}\sum_{k=1}^{r_n}a_{n,k}\frac{t^{k-1}}{(k-1)!}.
\end{equation} Suppose that for every $\epsilon>0$,
\begin{equation}\label{eq:convergenceinfinitepoles}
\sum_{n=1}^{+\infty}\sum_{k=1}^{r_n}\frac{|a_{n,k}|}{\epsilon^k}<+\infty.
\end{equation}
Then $f(t)$ is a locally integrable function and $P(s)$ defines a Laplace pair \( (f_l, f_r) \) on $\mathbb{C}$ with poles at each $s=ip_n$, where \(f_l(t)=f(t)(H(t)-1)\) and \(f_r(t)= f(t)H(t)\).  

\end{lemma}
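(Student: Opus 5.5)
The plan is to bound $f$ by an exponential of arbitrarily small rate, compute the one-sided Laplace transforms term by term, and then sum using the estimate \eqref{eq:convergenceinfinitepoles}.

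\textbf{Step 1: growth bound for $f$.} For $|t|\le T$ and any $\epsilon>0$ we have
\[
\Big|e^{ip_nt}a_{n,k}\frac{t^{k-1}}{(k-1)!}\Big|\le |a_{n,k}|\frac{|t|^{k-1}}{(k-1)!}.
\]
Write $\frac{|t|^{k-1}}{(k-1)!}=\epsilon^{-k}\cdot \epsilon\,\frac{(\epsilon|t|)^{k-1}}{(k-1)!}$. Since $\frac{(\epsilon|t|)^{k-1}}{(k-1)!}\le e^{\epsilon|t|}$, this gives
\[
|a_{n,k}|\frac{|t|^{k-1}}{(k-1)!}\le \epsilon\, e^{\epsilon|t|}\,\frac{|a_{n,k}|}{\epsilon^{k}}.
\]
By \eqref{eq:convergenceinfinitepoles} the double series defining $f$ therefore converges absolutely and uniformly on compact sets. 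Each term is continuous, so $f$ is continuous and hence locally integrable. We also obtain the global bound $|f(t)|\le C_\epsilon e^{\epsilon|t|}$ for every $\epsilon>0$, where $C_\epsilon=\epsilon\sum_{n,k}|a_{n,k}|\epsilon^{-k}$.

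\textbf{Step 2: Laplace transforms of the halves.} The bound from Step 1 shows that $f_r=fH$ has an absolutely convergent Laplace transform for $\Re(s)>\epsilon$. Since $\epsilon$ is arbitrary, this holds on all of $\Re(s)>0$. Likewise $f_l=f(H-1)$ has an absolutely convergent transform on $\Re(s)<0$.

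We then compute the transforms term by term. For $\Re(s)>0$,
\[
\int_0^\infty e^{-st}e^{ip_nt}\frac{t^{k-1}}{(k-1)!}\,dt=\frac{1}{(s-ip_n)^k}.
\]
For $\Re(s)<0$,
\[
\int_{-\infty}^0 e^{-st}e^{ip_nt}\frac{t^{k-1}}{(k-1)!}\,dt=-\frac{1}{(s-ip_n)^k}.
\]
The sign matches $f_l=-f$ on $t<0$, so $\mathcal{L}(f_l)=\sum_{n,k}a_{n,k}(s-ip_n)^{-k}$ as well.

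The interchange of sum and integral is justified by Fubini/Tonelli. Fix $\Re(s)=x>0$ and choose $\epsilon<x$. The majorant $\sum_{n,k}|a_{n,k}|\frac{t^{k-1}}{(k-1)!}e^{-xt}$ integrates to $\sum_{n,k}|a_{n,k}|x^{-k}$, which is finite by \eqref{eq:convergenceinfinitepoles} applied with $\epsilon=x$. The case $x<0$ is symmetric. Hence $\mathcal{L}(f_r)=P(s)$ on the right half-plane and $\mathcal{L}(f_l)=P(s)$ on the left half-plane.

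\textbf{Step 3: meromorphy of $P$ on $\mathbb{C}$ (the main obstacle).} It remains to show that the series $P(s)$ converges to a single meromorphic function on all of $\mathbb{C}$, with poles exactly at the points $ip_n$. This is what makes the two representations one "Laplace pair" across the separatrix $\Re(s)=0$.

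Fix a compact set $K\subset\mathbb{C}$. Since the $p_n$ have no finite accumulation point, only finitely many poles $ip_n$ lie within distance $1$ of $K$. For the remaining $n$, every $s\in K$ satisfies $|s-ip_n|\ge 1$, so
\[
\sum_k |a_{n,k}|\,|s-ip_n|^{-k}\le \sum_k |a_{n,k}|.
\]
The right-hand side is summable over $n$ by \eqref{eq:convergenceinfinitepoles} with $\epsilon=1$. By the Weierstrass M-test, the tail of $P$ converges uniformly on $K$ to a holomorphic function. Adding back the finitely many excluded principal parts shows that $P$ is meromorphic on $\mathbb{C}$ with poles exactly at the $ip_n$. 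This meromorphic function agrees with the two Laplace transforms found in Step 2 on the two half-planes, so $P$ is the Laplace pair $(f_l,f_r)$, as claimed.
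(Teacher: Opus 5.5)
Your proposal is correct and follows essentially the same route as the paper: the bound $t^{k-1}/(k-1)!\le e^{\epsilon |t|}/\epsilon^{k-1}$ giving $|f(t)|\le C_\epsilon e^{\epsilon|t|}$, termwise computation of the one-sided Laplace integrals, and meromorphy of $P$ by separating the finitely many poles within distance $1$ of a compact set and applying \eqref{eq:convergenceinfinitepoles} with $\epsilon=1$. You only change the order of the steps, and you spell out two things the paper leaves implicit: the dominating majorant for interchanging sum and integral (taking $\epsilon=|\Re(s)|$), and the sign computation on the left half-plane.
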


\begin{proof}
Let $K$ be a compact subset of $\mathbb{C}$ not containing any of the points $s=ip_n$. 
Since the poles are isolated, only finitely many lie within distance $1$ of $K$. Consequently, there exists a constant $C>0$, depending only on $K$, such that

\[ |P(s)| \leq \sum_{\substack{n,k \\ d(ip_n,K)<1}} \frac{|a_{n,k}|}{|s-ip_n|^k}+ \sum_{\substack{n,k \\ d(ip_n,K) \geq1}} \frac{|a_{n,k}|}{|s-ip_n|^k} < C + \sum_{n=1}^{+\infty}\sum_{k=1}^{r_n}|a_{n,k}|\]

The latter series converges by \eqref{eq:convergenceinfinitepoles} (with $\epsilon=1$). 
Hence $P(s)$ converges uniformly on compact subsets not containing poles 
and, by the Weierstrass theorem, defines a meromorphic function on $\mathbb{C}$ with poles at each $s=ip_n$.

To establish the Laplace representation for $\Re(s)>0$, it suffices to show that for every $\epsilon>0$ there exists $C_\epsilon>0$ such that, for all $t \geq 0$, $|f(t)| \le C_\epsilon e^{\epsilon t}.$

For $t\ge 0$, we have $\frac{t^{k-1}}{(k-1)!} \le \frac{e^{\epsilon t}}{\epsilon^{k-1}},$
and therefore
\begin{equation}\label{eq:funi}
|f(t)|
\le e^{\epsilon t}
\sum_{n=1}^{+\infty}\sum_{k=1}^{r_n}
\frac{|a_{n,k}|}{\epsilon^{k-1}}
\le C_\epsilon e^{\epsilon t},
\end{equation}
where $C_\epsilon<\infty$ by \eqref{eq:convergenceinfinitepoles}.
This bound justifies the interchange of  summation and integration, showing that $f(t)$ is locally integrable. For $\Re(s)>0$,
\[
\int_0^{+\infty} e^{-st}f(t)\,dt
=
\sum_{n=1}^{+\infty}\sum_{k=1}^{r_n}
a_{n,k}
\int_0^{+\infty}
e^{-st}\frac{t^{k-1}}{(k-1)!}\,dt
=
P(s).
\]
The same argument applies for $\Re(s)<0$, completing the proof.
\end{proof}

\begin{theorem}\label{thm:infinitepoles}
Let the poles of $F(s)$ be such that its principal part $P(s)$ satisfies the hypotheses of Lemma~\ref{lem:polespart}. 
If the holomorphic part of $F$,  defined by $H(s)=F(s)-P(s)$,
satisfies the Phragmén--Lindelöf condition \eqref{eq_PL}, then
\begin{equation}\label{eq:infinitetransitionformula}
\mu_r(t)-\mu_l(t)
=
\sum_{n=1}^{+\infty}
\Res\!\left((e^{st}-1)\frac{F(s)}{s},\, s=ip_n\right)
\end{equation} where the series on the right converges uniformly on every compact subset of the real line.

\end{theorem}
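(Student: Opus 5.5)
The plan is to split $F=H+P$ and treat the two pieces separately, using Theorem \ref{thm:hol} for $H$ and the explicit Laplace pair of Lemma \ref{lem:polespart} for $P$.

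\textbf{Step 1: the principal part.} By Lemma \ref{lem:polespart}, $P$ is meromorphic on $\mathbb{C}$ with poles only at the $ip_n$. It is a Laplace pair $(f_l,f_r)$ with $f_r=fH$ and $f_l=f(H-1)$, so $f_r-f_l=f$. The corresponding normalized determining functions are $\nu_{r,l}(t)=\int_0^t f_{r,l}(u)\,du$.

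\textbf{Step 2: the holomorphic part.} The function $H=F-P$ has no poles on the imaginary axis, since the principal parts cancel. On $\Omega_{a,0}$ and $\Omega_{0,b}$ it is represented by $\mu_l-\nu_l$ and $\mu_r-\nu_r$ respectively. These are differences of two Laplace integrals, both convergent there, so they are Laplace integrals on each half-strip. By hypothesis $H$ satisfies \eqref{eq_PL}. Theorem \ref{thm:hol} then gives $\mu_r-\nu_r=\mu_l-\nu_l$, hence
\[\mu_r(t)-\mu_l(t)=\nu_r(t)-\nu_l(t)=\int_0^t f(u)\,du .\]

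\textbf{Step 3: identifying the residue series.} Integrate the series \eqref{eq:formalinverse} term by term. This is justified because the bound \eqref{eq:funi} and its analogue for $t<0$ give, for $|t|\le T$, a summable majorant $\sum_{n,k}|a_{n,k}|T^{k-1}/(k-1)!$. That majorant is finite by \eqref{eq:convergenceinfinitepoles} with $\epsilon=1/T$, since $T^{k-1}/(k-1)!\le e\,T^{k-1}$ for $T\ge1$. The result is
\[\int_0^t f=\sum_n\int_0^t e^{ip_nu}\sum_k a_{n,k}\frac{u^{k-1}}{(k-1)!}\,du .\]

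Next identify each term with $\Res((e^{st}-1)F(s)/s,\,s=ip_n)$:
\begin{itemize}
\item Only the principal part $P_n$ of $F$ at $ip_n$ contributes to this residue. The remaining part of $F$ is holomorphic at $ip_n$, as is $(e^{st}-1)/s$; this holds even when $p_n=0$, since $(e^{st}-1)/s$ is entire.
\item As a function of $t$, the residue vanishes at $t=0$.
\item By the computation \eqref{eq:differentiationofresidues}, its $t$-derivative is $e^{ip_nt}\sum_k a_{n,k}t^{k-1}/(k-1)!$. That computation is valid also for $p_n=0$ after replacing $1/s$ by the entire kernel.
\end{itemize}
Hence each residue equals $\int_0^t e^{ip_nu}\sum_k a_{n,k}u^{k-1}/(k-1)!\,du$, which proves \eqref{eq:infinitetransitionformula}.

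\textbf{Step 4: uniform convergence.} The $n$-th term is bounded on $|t|\le T$ by $T\sum_k |a_{n,k}|T^{k-1}/(k-1)!$. These bounds are summable in $n$ by Step 3. The Weierstrass M-test then gives uniform convergence on compact subsets of $\mathbb{R}$.

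\textbf{Main obstacle.} I expect the delicate step to be Step 2, namely checking that $H$ genuinely meets the hypotheses of Theorem \ref{thm:hol}. One must verify that $\mu_{r,l}-\nu_{r,l}$ are normalized, which holds because both summands are. One must also confirm that the Laplace integrals of the differences converge on the full half-strips, which follows since both $\nu$-integrals converge on entire half-planes. Finally, the continuity-to-the-boundary requirement in \eqref{eq_PL} holds since $H$ is holomorphic on all of $\Omega_{a,b}$.
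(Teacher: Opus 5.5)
Your proposal is correct and follows the paper's proof. You split $F=H+P$ and apply Theorem~\ref{thm:hol} to the holomorphic Laplace pair $(\mu_l-\nu_l,\mu_r-\nu_r)$ to get $\mu_r-\mu_l=\int_0^t f(u)\,du$; then you integrate \eqref{eq:formalinverse} termwise and identify each term with the corresponding residue, exactly as the paper does.

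Your extra care tightens the paper's argument without changing its route. You give a two-sided majorant on $[-T,T]$, whereas \eqref{eq:funi} only treats $t\ge 0$. You handle a possible pole at $p_n=0$ through the entire kernel $(e^{st}-1)/s$, where the paper's expansion of $1/s$ breaks down. You also make the Weierstrass M-test for the residue series explicit.
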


\begin{proof}
Let $\mu_{r,P}$ and $\mu_{l,P}$ denote the determining measures of $P(s)$ 
on the right and left half-planes, respectively. Then $H(s)$ defines a holomorphic Laplace pair \( (\mu_{l,H}, \mu_{r,H} )\) where
\( \mu_{r,H}(t)=\mu_r(t)-\mu_{r,P}(t) \) and \( \mu_{l,H}(t)=\mu_l(t)-\mu_{l,P}(t).\)

Since $H(s)=F(s)-P(s)$ is holomorphic and satisfies the Phragmén--Lindelöf condition, it follows from
Theorem~\ref{thm:hol} that \( \mu_{r,H}(t)=\mu_{l,H}(t)\), 
and therefore 
\[
\mu_r(t)-\mu_l(t)
=
\mu_{r,P}(t)-\mu_{l,P}(t).
\]
Then, by Lemma~\ref{lem:polespart},
\[
\mu_{r,P}(t)-\mu_{l,P}(t)
=
\int_0^t f(u)\,du,
\]
where $f(t)$ is given by the series  \eqref{eq:formalinverse}.
Estimate \eqref{eq:funi} now ensures that this series converges uniformly on compact sets, so that integration may be carried out termwise. Using the residue computations \eqref{eq:residuecomputation} and \eqref{eq:differentiationofresidues} in Lemma \ref{lem:transitionforfinite} for the finite pole case, we obtain the transition formula
\eqref{eq:infinitetransitionformula}, as desired.
\end{proof}

\section{Examples and applications}
\label{sec_applications}
\begin{example}{(The zeta function)}
\label{ex_zeta_function}
The  Riemann zeta function is defined by \( \zeta(s)= \sum_{n=1}^{+ \infty} \frac{1}{n^s}\) for $\Re(s)>1$. It is easily shown that its expression  as a Laplace integral of a normalized measure $\mu_{1, \infty}$ in that half-plane is 

\begin{equation}\label{eq:zetamu}
    \zeta(s)= \int_{-\infty}^{+ \infty}e^{-st}\,d\mu_{1, \infty}(t), \quad \mu_{1, \infty}(t)= \begin{cases}
        \floor{e^t}-\frac{1}{2}, \, t>0, \, e^t \notin \mathbb{Z} \\
        n, \, t= \log(n+1), \, n\geq 0 \\
        -\frac{1}{2}, \, t < 0
    \end{cases}.
\end{equation}
On each vertical line $\Re(s)=x,$ the vertical growth of the zeta function satisfies \begin{equation}\label{eq:zetagrowth}
    |\zeta(x+iy)| = O(|y|^A)
\end{equation} where $A$ is a constant which may depend on $x$; see Ivic \cite{Ivic} Chapter VII, section 5, for a detailed treatment. Thus the zeta function satisfies the Phragmén--Lindelöf condition on every vertical strip. It has a simple pole at $s=1$ with residue $1.$ An application of the transition formula \eqref{eq:finite_transition_formula} for the  crossing of the separatrix $\Re(s)=1$ yields \(\mu_{1, \infty}(t)- \mu_l(t)= e^t-1\). We thus obtain 

\begin{equation*}\label{eq:mul}
    \mu_l(t)= \begin{cases}
        \frac{1}{2}- \{e^t\}, \, t>0, \, e^t \notin \mathbb{Z} \\
        n+1-e^t, \, t= \log(n+1), \, n\geq 0 \\
        \frac{1}{2}-e^t, \, t < 0
    \end{cases}
\end{equation*}

A straightforward calculation shows that the maximal strip of convergence of $\mu_l$ is $\Omega_{0,1}$, so we denote it by $\mu_{0,1}$.  
Hence, on this strip, the Laplace representation
\begin{equation}\label{eq:zeroone}
\zeta(s)=\int_{-\infty}^{+ \infty}e^{-st} \,d\mu_{0,1}(t),
\end{equation} 
holds.

\begin{figure}[h]
\centering

\begin{subfigure}[t]{0.48\textwidth}
\centering

\def\ystretch{0.85}

\begin{tikzpicture}[scale=0.865,yscale=\ystretch]

\draw[->] (-4,0)--(3.5,0) node[right] {\footnotesize $t$};
\draw[->] (0,-2.5)--(0,2.5) node[left] {\footnotesize $\mu_{0,1}(t)$};

\draw[dashed] (-4,1.5)--(3.5,1.5) node[right] {\footnotesize $\frac{1}{2}$};
\draw[dashed] (-4,-1.5)--(3.5,-1.5) node[right] {\footnotesize $-\frac{1}{2}$};

\draw[thick,domain=-4:0,samples=200]
plot(\x,{1.5-3*exp(\x)});

\fill (0,0) circle(1.5pt);

\foreach \n in {1,2,3,4}
{
\draw[thick,domain={2*ln(\n)}:{2*ln(\n+1)},samples=200]
plot(\x,{1.5-3*(exp(\x/2)-\n)});
\fill ({2*ln(\n)},0) circle(1.5pt);
}
\end{tikzpicture}

\end{subfigure}
\begin{subfigure}[t]{0.48\textwidth}
\centering
\begin{tikzpicture}[scale=0.9]

\def\a{0.7}
\def\b{1.1}
\def\c{1.4}
\def\d{1.6}
\def\x{0.4}

\draw[->] (-2,0)--(5,0) node[right]{\footnotesize$t$};
\node[below] at (3*\a,0) {\footnotesize$\log 2$};
\node[below] at (3*\b,0) {\footnotesize$\log 3$};
\node[below] at (3*\c,0) {\footnotesize$\log 4$};
\node at (3*\a,0) {\footnotesize$\shortmid$};
\node at (3*\b,0) {\footnotesize$\shortmid$};
\node at (3*\c,0) {\footnotesize$\shortmid$};

\draw[->] (0,-1)--(0,3) node[left]{\footnotesize$\mu_{1,\infty}(t)$};

\node[left] at (0,\x) {\footnotesize$\frac{1}{2}$};
\node[right] at (0,-\x) {\footnotesize$-\frac{1}{2}$};

\draw[thick] (-2,-\x)--(0,-\x);
\draw[thick] (0,\x)--(3*\a,\x);
\draw[thick] (3*\a,2*\x+\x)--(3*\b,2*\x+\x);
\draw[thick] (3*\b,4*\x+\x)--(3*\c,4*\x+\x);
\draw[thick] (3*\c,6*\x+\x)--(3*\d,6*\x+\x);

\fill (0,0) circle(1.5pt);
\fill (3*\a,2*\x) circle(1.5pt);
\fill (3*\b,4*\x) circle(1.5pt);
\fill (3*\c,6*\x) circle(1.5pt);

\end{tikzpicture}
\end{subfigure}

\caption{Graphs of the measures $\mu_{0,1}$ and $\mu_{1,\infty}$.}
\end{figure}
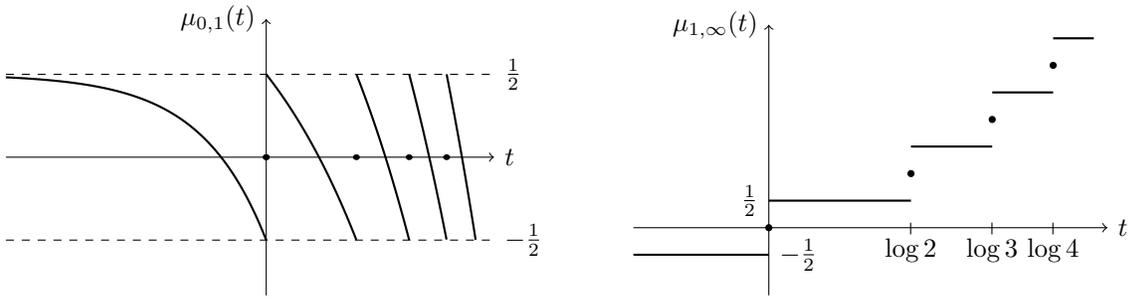

We remark that in \cite{BuescuPolar} (see section 4.5), the authors proved that $\zeta$ must lose its polarity upon crossing the separatrix $\Re(s)=1.$ Our explicit representation on $\Omega_{0,1}$ provides another proof of this fact: since $\mu_{0,1}$ is not monotonic,  the corresponding measure is neither positive nor negative, implying that $F$ in neither co-positive definite nor co-negative definite -- or, equivalently, that is possesses no polarity.

Since there are no poles on the imaginary axis, estimate \eqref{eq:zetagrowth} allows us  to prove that the $\zeta$ function does not have a Laplace representation on any strip \(\Omega_{a,0}\) adjacent to the left of the imaginary axis.
Indeed, if that were the case, direct application of
Theorem \ref{thm:hol} shows that  its determining measure would necessarily be equal to $\mu_{0,1}$. But this is impossible  since $\Omega_{0,1}$ is the maximal strip of convergence of $\mu_{0,1}$. 

However, performing integration by parts once on \eqref{eq:zeroone} we obtain for $s \in \Omega_{0,1}$

\begin{equation}\label{eq:zetasobres01}
    \frac{\zeta(s)}{s}= \int_{-\infty}^{+ \infty}e^{-st}f_{0,1}(t)\,dt, \quad f_{0,1}(t)=\begin{cases}
         - \{e^t \}, \, t \geq 0 \\
    -e^t, \, t<0
    \end{cases}.
\end{equation}

The function \(\frac{\zeta(s)}{s}\) has a simple pole at $s=0$ with residue equal to \(-\frac{1}{2}.\) Applying the transition formula \eqref{eq:transitionformula} yields

\begin{equation}\label{eq:minusonezero}f_{0,1}(t)+\frac{1}{2}= \begin{cases}
    \frac{1}{2}- \{e^t \}, \, t \geq 0 \\
    \frac{1}{2}-e^t, \, t<0
\end{cases}.\end{equation}

It is not difficult to show, with the aid of the substitution \(u=e^t\), that the determining  function obtained in \eqref{eq:minusonezero} has \(\Omega_{-1,0}\) as its maximal strip of convergence. Thus the representation on that strip is equal to

\begin{equation}\label{eq:zetasobresmenosum}
     \frac{\zeta(s)}{s}= \int_{-\infty}^{+ \infty}e^{-st}f_{-1,0}(t) \,dt, \quad f_{-1,0}(t)= \begin{cases}
         \frac{1}{2}- \{e^t \}, \, t \geq 0 \\
    \frac{1}{2}-e^t, \, t<0
    \end{cases}.
\end{equation}

Since there are no poles  on the separatrix $\Re(s)=-1,$ an application of Theorem \ref{thm:hol} once again proves that  \(\frac{\zeta(s)}{s}\) does not have a Laplace representation on any strip \(\Omega_{a,-1}\) adjacent to the left of $\Re(s)=-1.$ 

Observe that \eqref{eq:zetasobres01} and \eqref{eq:zetasobresmenosum} recover the classical Laplace representations of \(\frac{\zeta(s)}{s}\) on successive vertical strips of holomorphy as described in   Titchmarsh \cite{TitchmarshZeta}.

\end{example}

\begin{example}[Periodic  Functions]\label{ex:periodic}

Let $f(t)$ be a bounded locally integrable periodic function of period $T$. 
Its Laplace transform
\[
F(s)=\int_0^{+\infty} e^{-st}f(t)\,dt
\]
is holomorphic for $\Re(s)>0$. By periodicity,
\[
F(s)
=
\sum_{n=0}^{+\infty}
\int_{nT}^{(n+1)T}
e^{-st}f(t)\,dt
=
\sum_{n=0}^{+\infty}
e^{-snT}
\int_0^T e^{-su}f(u)\,du.
\]
Writing
$
{\displaystyle g(s)=\int_0^T e^{-su}f(u)\,du,}
$
we obtain $\displaystyle{F(s)=\frac{g(s)}{1-e^{-sT}}.}$
Thus $F(s)$ extends meromorphically to the complex plane, with simple poles at
$\displaystyle{ s_n=\frac{2\pi i n}{T}, \, n\in\mathbb{Z},}$
whose residues are precisely the Fourier coefficients $a_n$ of $f$.

Since $F(s)$ admits Laplace representations on both half-planes, it defines a Laplace pair $(f_l,f_r)$ satisfying
\[
f_r(t)-f_l(t)=f(t).
\]
On the other hand, the transition formula \eqref{eq:transitionformula} formally yields
\begin{equation}\label{eq:fouriertransition}
f_r(t)-f_l(t)
=
\sum_{n\in\mathbb{Z}}
a_n e^{\frac{2\pi i n}{T}t}.
\end{equation}

Hence, in the periodic setting, the transition formula recovers the Fourier series of $f$. 
Its validity must therefore be interpreted in the same sense in which we take the convergence of the Fourier series to represent the function. We shall use below standard results from the classical theory of convergence and summability of Fourier series which can be found in Katznelson \cite{Katznelson}.

If
$
\sum_{n\in\mathbb{Z}} |a_n| < \infty,
$
the summability condition of Theorem~\ref{thm:infinitepoles} is satisfied. 
Consequently, the transition formula holds pointwise and uniformly, since the Fourier series converges absolutely.

As an illustration of the case where absolute convergence fails, consider
\[
F(s)=\frac{1-e^{-s\pi}}{s(1+e^{-s\pi})}.
\]
The corresponding determining function is the $\pi$-periodic function
\(
f(t)=\operatorname{sgn}(\sin t),
\)
whose Fourier series is given by $\displaystyle{\frac{2}{i\pi}\sum_{n\in\mathbb{Z}}
\frac{1}{2n+1}
e^{i(2n+1)t}.}$
 While this series does not converge absolutely, it is not difficult to show that its Cèsaro means converge to $f$ in $L^1.$ Moreover, this is always true for bounded periodic functions. Therefore, when $f(t)$ is a bounded locally integrable periodic determining function, the transition formula \eqref{eq:fouriertransition} can always be interpreted as converging in $L^1$ via Cèsaro means.

\end{example}
\begin{example}[Stepanoff Almost Periodic Functions]\label{ex:almostperiodic}
A locally integrable function $f$ is called {\em Stepanoff almost periodic} (see \cite{Levitan})
if it is the limit of trigonometric polynomials in the Stepanoff norm
\[
\|f\|_S
=
\sup_{x\in\mathbb{R}}
\int_x^{x+1}|f(t)|\,dt.
\]
For such functions, the Fourier coefficients
\[
a(f,\lambda)
=
\lim_{T\to\infty}
\frac{1}{2T}
\int_{-T}^{T}
f(t)e^{-i\lambda t}\,dt
\]
exist for frequencies $\lambda$ in the spectrum $\sigma(f)$.

Bieberich \cite{Bieberich} proved that every Stepanoff almost periodic function defines a Laplace pair \((f_l, f_r)\) on the complex plane with simple poles at $s=i\lambda$, $\lambda\in\sigma(f)$, and residue $a(f,\lambda),$ where \(f_l(t)=f(t)(H(t)-1)\) and \(f_r(t)=f(t)H(t).\)

Consequently, the transition formula takes the form
\[
f_r(t)-f_l(t)
=
\sum_{\lambda\in\sigma(f)}
a(f,\lambda)e^{i\lambda t}.
\]
 
A classical approximation theorem (see Chapter 2, sections 4-6 of Levitan \cite{Levitan})  ensures the existence of a family of kernels \( \{k_N(s) \}_{n \in \mathbb{N}} \) whose properties imply that, for each positive integer $N,$ the sums
\[
S_N(t)
=
\sum_{\lambda\in\sigma(f)}
k_N(\lambda)
a(f,\lambda)e^{i\lambda t}
\]
have only finitely many nonzero terms and  converge to $f(t)$ in the Stepanoff norm.

Thus, for Stepanoff almost periodic functions, the transition formula is to be interpreted in the Stepanoff topology.

\end{example}

\begin{example}[A quotient of $\Gamma$ functions]\label{ex:gamma}
In \cite{BuescuStrips, BuescuPolar, BuescuSpecial},   the measure transitions for the Gamma function  $\Gamma(s)$ 
were  explicitly computed, revealing that $\Gamma$ alternates between 
co-positive and co-negative definiteness across successive vertical strips of holomorphy.
We now consider a natural generalization and examine how this polarity phenomenon 
behaves for a quotient of Gamma functions.

Let 
\[
\Gamma(a:b;\,s)=\frac{\Gamma(s)\Gamma(a-s)}{\Gamma(b-s)},
\]
where $a,b\in\mathbb{C}$ satisfy $\Re(a)>0$ and $\Re(b)>0$.
This function is related to the confluent hypergeometric function
\[
{}_1F_1(a:b;z)=\sum_{n=0}^{\infty}\frac{(a)_n}{(b)_n}\frac{z^n}{n!}, 
\qquad {\rm where } \ 
(a)_n=\frac{\Gamma(a+n)}{\Gamma(a)},
\]
through the Mellin--Barnes representation
\[
\frac{1}{2\pi i} 
\int_{c-i\infty}^{c+i\infty}
\frac{\Gamma(s)\Gamma(a-s)}{\Gamma(b-s)}
t^{-s}\,ds
=
\frac{\Gamma(a)}{\Gamma(b)}
\,{}_1F_1(a:b;-t),
\]
valid for $0<c<\Re(a)$ and $t>0$; 
see Marichev \cite{Marichev}, Part 1, section 5, or Paris and Kaminski \cite{Paris} section 3.4, for a thorough  treatment.

Since this integral is an inverse Mellin transform, it follows that
\[
\Gamma(a:b;\,s)
=
\int_{-\infty}^{+\infty}
e^{-st}
\frac{\Gamma(a)}{\Gamma(b)}
\,{}_1F_1(a:b;-e^{-t})
\,dt,
\]
which provides a Laplace representation on the strip $\Omega_{0, \Re(a)}$.

Stirling’s formula for the $\Gamma$ function (see  \cite{Paris}, 2.1.8) shows that \[ \Gamma(a:b;\,s)=O \bigg(|y|^{x-1/2+ \Re(a)-\Re(b)}e^{-\frac{\pi}{2}|y|-y \arg(t)} \bigg), \quad |y| \to + \infty \] implying that it satisfies the 
Phragmén--Lindelöf condition on vertical strips, and therefore that the transition formula applies across each pole.

When $a-b\notin\mathbb{Z}$, the function has simple poles at 
$s=-n$ and $s=a+n$, for $n=0,1,2,\dots$. 
The residues at the poles $s=-n$ and $s=a+n$ are given, respectively, by
\[
{\rm Res}(\Gamma(a : b; -n))=\frac{(-1)^n} {n!}\frac{\Gamma(a+n)}{\Gamma(b+n)}
\quad \text{and} \quad {\rm Res}(\Gamma(a : b; a+n))=
\frac{(-1)^n}{n!}\frac{\Gamma(a+n)}{\Gamma(b-a-n)}.
\]

Let $f_0(t)$ denote the determining function on the central strip $\Omega_{0,\Re(a)}$. 
For $n\ge1$, let $f_n(t)$ denote the determining function on the strip 
$\Omega_{a+n-1,a+n}$, 
and let $f_{-n}(t)$ denote the determining function on the strip 
$\Omega_{-n,-n+1}$.
 The transition formula then yields
\[
f_{n+1}(t)-f_n(t)
=
e^{(a+n)t}
\frac{(-1)^n}{n!}
\frac{\Gamma(a+n)}{\Gamma(b-a-n)},
\]
and
\begin{equation}\label{eq:confluenttrans}
f_{-n}(t)-f_{-n-1}(t)
=
e^{-nt}
\frac{(-1)^n}{n!}
\frac{\Gamma(a+n)}{\Gamma(b+n)}.
\end{equation}

In contrast with the $\Gamma$ function, in this case
the residues need not have a fixed sign pattern. 
As \eqref{eq:confluenttrans} shows, the resulting determining functions induce a function $F(s)$ which does not preserve or even possess polarity across adjacent strips.

Suppose now that $b-a=m$ is a positive integer. Then
\begin{equation}\label{eq:quot}
\frac{\Gamma(a-s)}{\Gamma(b-s)}
=
\frac{1}{(a+m-1-s)\cdots(a-s)}.
\end{equation}
Since \[\frac{1}{a+k-s}=\int_{-\infty}^{0}e^{-st}e^{(a+k)t}\,dt, \quad \Re(s)<a+k, \quad 0 \leq k \leq m-1,\] each factor $\frac{1}{a+k-s}$ is co-positive definite on $\Re(s)<a+k$ and co-negative definite on \mbox{ $\Re(s)>a+k.$} It follows that the quotient in \eqref{eq:quot} is co-positive definite on $\Re(s)<a$ and then alternates its polarity upon crossing each strip delimited by the poles at $s=a+k, \,\, 0 \leq m-1.$ 

\begin{center}
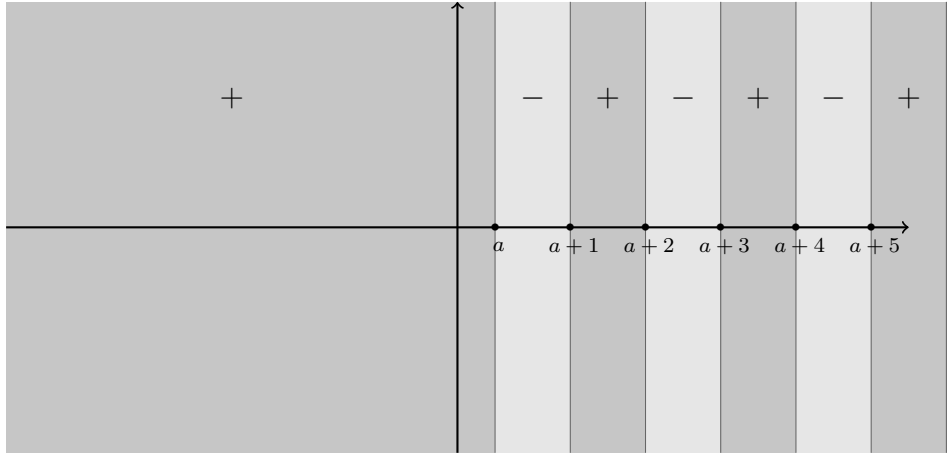

\begin{tikzpicture}[scale=1]
\def\a{0.5}      
\def\xmin{-6}
\def\xmax{6}
\def\ymin{-3}
\def\ymax{3}

\fill[gray!45] (\xmin,\ymin) rectangle (\a,\ymax);

\foreach \k in {0,...,5} {
    \pgfmathtruncatemacro{\parity}{mod(\k,2)}
    \ifnum\parity=0
        \fill[gray!20]
            ({\a+\k},\ymin) rectangle ({\a+\k+1},\ymax);
    \else
        \fill[gray!45]
            ({\a+\k},\ymin) rectangle ({\a+\k+1},\ymax);
    \fi
}

\foreach \k in {0,...,6} {
    \draw[black!60] ({\a+\k},\ymin) -- ({\a+\k},\ymax);
}

\draw[->, thick] (\xmin,0) -- (\xmax,0);
\draw[->, thick] (0,\ymin) -- (0,\ymax);

\foreach \k in {1,...,5}{
  \node[below] at (\a+\k +0.05,0) {\scriptsize$a+\k$};
  \node at (\a+\k,0) {\tiny $\bullet$};
}
\foreach \k in {1,3,5}{
  \node[below] at (\a+\k -0.5,2) {$-$};
}
\foreach \k in {-3,2,4,6}{
  \node[below] at (\a+\k -0.5,2) {$+$};
}
\foreach \k in {0}{
  \node[below] at (\a +0.05,-0.05) {\scriptsize $a$};
  \node at (\a,0) {\tiny $\bullet$};
}
\end{tikzpicture}

\captionof{figure}{Polarity of the quotient \(\frac{\Gamma(a-s)}{\Gamma(b-s)} \) when $b-a$ is an integer.}
\end{center}

Hence, for the special case where $b-a$ is a positive integer, the function $\Gamma(a:b; \, s)$ exhibits the same alternating polarity phenomenon across adjacent strips observed in the Gamma function.

\end{example}

\section{Ghost transitions}
\label{sec_ghosts}

The examples in section \ref{sec_applications} illustrate two types of behavior. For \ref{ex:periodic}, \ref{ex:almostperiodic} and \ref{ex:gamma}, the presence of poles on the separatrix accounted for the difference between $\mu_r$ and $\mu_l$ and the transition formula was valid in the appropriate sense. The second type of behavior entailed the loss of Laplace representation upon crossing a separatrix containing no poles: this is the case of example \ref{ex_zeta_function}, dealing with the $\zeta$ function on the strip $\Omega_{0,1}$ and $\zeta(s)/s$ on $\Omega_{-1,0}.$ This leads to the following question:

\emph{Must analytic continuation beyond a separatrix which contains no poles always be accompanied by the loss of Laplace representability?}

In \cite{Harper}, Harper constructed an example of an entire function having different Laplace representations on strips separated by an arbitrarily small $\epsilon > 0$; however, his construction (based on Newman's entire function bounded on every direction, see \cite{Newman}) requires a nonzero gap between strips which posesses no Laplace representation.
In this section, we answer the question in the negative by constructing an entire function which nonetheless defines a Laplace pair \( (f_l, f_r) \) over \(\mathbb{C} \) (with the imaginary axis as the separatrix) such that the difference \(f_r-f_l\) is non-trivial on some subset of the reals with positive Lebesgue measure.

We call such a phenomenon a {\em Ghost Transition}, since the sudden change in the determining function does not arise from the presence of a singularity in the complex plane.

\begin{definition}[Ghost Transition]

Let $F$ be holomorphic on a vertical strip $\Omega_{a,b}$ defining a Laplace pair $(\mu_l,\mu_r)$ or $( f_l, f_r).$ We say that this Laplace pair is a {\em Ghost Transition} on $\Omega_{a,b}$ if $\mu_r-\mu_l$ (respectively $f_r-f_l$) is nonzero on a set of positive Lebesgue measure.
    
\end{definition}

In view of Theorem \ref{thm:hol}, a ghost transition must correspond to a holomorphic function whose growth along the separatrix violates the Phragm\'{e}n--Lindel\"{o}f condition \eqref{eq_PL}.
Our objective is then to construct holomorphic functions satisfying the growth condition on a vertical line  
\begin{equation}\label{eq:condition}
\limsup_{y\to+\infty}
\frac{|F(iy)|}{\exp\!\big(ae^{y/\epsilon}\big)}
=
+\infty.
\end{equation}
for every $\epsilon>0$ and every $a>0$.

We now construct an explicit example of an entire function satisfying this condition and which is representable by Laplace transforms on the right and left half-planes, showing that ghost transitions do indeed exist. We follow along the lines of Lasse-Rempe \cite{Rempe}; see section 2, which used similar constructions in the context of Transcendental Dynamics.

For each $M>0$ consider a clockwise oriented contour $\gamma_M$ in the complex plane $\mathbb{C}$ where $\zeta=\sigma+i \tau$ where $\gamma_M$ is the union

\begin{equation*}\label{eq:gammaM} \gamma_M= \bigg \{\zeta \in \mathbb{C}: |\sigma|= \frac{\pi}{2 \tau}, \tau\geq M  \bigg \} \cup \bigg \{\zeta \in \mathbb{C}: \tau=M, |\sigma| \leq \frac{\pi}{2M} \bigg \}.\end{equation*}


\noindent Denote by $D_M$ the open region lying to the right of $\gamma_M$ as it is traversed clockwise.

\begin{center}
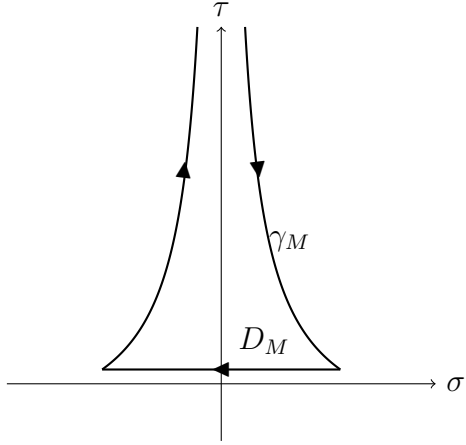

\begin{tikzpicture}[scale=1.9]

\draw[->] (-1.5,0.5) -- (1.5,0.5) node[right] {$\sigma$};
3\draw[->] (0,0.1) -- (0,3) node[above] {$\tau$};

\draw[thick,domain=0.6:3,samples=200]
  plot ({ 1.5/(3*\x) },{\x});

\draw[thick,domain=0.6:3,samples=200]
  plot ({ -1.5/(3*\x) },{\x});

\draw[thick]
  ({-1.5/(3*0.6)},0.6) -- ({1.5/(3*0.6)},0.6);

\node[left] at (0.7,1.5) {$\gamma_M$};
\node at (0.3,0.8) {$D_M$};
\node at (0, 0.6) {\rotatebox{90}{\footnotesize$\blacktriangle$}};
\node at (0.26, 2) {\rotatebox{187}{\footnotesize$\blacktriangle$}};
\node at (-0.26, 2) {\rotatebox{-7}{\footnotesize$\blacktriangle$}};
\end{tikzpicture}
\captionof{figure}{Contour $\gamma_M$ and region $D_M$.}
\end{center}

\begin{lemma}\label{lem:ghostconstruction}

The Cauchy integral \begin{equation*}\label{eq:cauchyint}
I(s)= \frac{1}{2 \pi i}\int_{\gamma_M}\frac{e^{e^{- \zeta^2}}}{s-\zeta} \,d\zeta
\end{equation*} defines a holomorphic function on $\mathbb{C}\setminus \gamma_M,$ and 
\begin{equation}\label{eq:entireghost}
F(s)=
\begin{cases}
I(s), & s\in\mathbb{C}\setminus D_M,\\
e^{e^{-s^2}}+I(s), & s\in D_M.
\end{cases}
\end{equation} extends to an entire function on the complex plane which satisfies the growth condition \eqref{eq:condition}.
    
\end{lemma}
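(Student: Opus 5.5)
The plan is to first check that the integrand decays doubly exponentially along the two unbounded branches of $\gamma_M$. Then I would use Cauchy's theorem to deform the contour, which gives the entire extension. Finally I would read off the growth on the imaginary axis from the explicit term $e^{e^{-s^2}}$.

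\emph{Step 1: convergence and holomorphy of $I$.} Write $\zeta=\sigma+i\tau$, so that $-\zeta^2=\tau^2-\sigma^2-2i\sigma\tau$ and
\[
\Re\big(e^{-\zeta^2}\big)=e^{\tau^2-\sigma^2}\cos(2\sigma\tau).
\]
On the branches $|\sigma|=\pi/(2\tau)$ we have $2\sigma\tau=\pm\pi$. Hence $e^{-\zeta^2}=-e^{\tau^2-\sigma^2}$, and
\[
\big|e^{e^{-\zeta^2}}\big|=\exp\!\big(-e^{\tau^2-\sigma^2}\big),
\]
which is doubly exponentially small as $\tau\to+\infty$. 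The arc length element satisfies $|d\zeta|\le(1+O(\tau^{-4}))^{1/2}\,d\tau$, so the integral converges absolutely. The convergence is uniform for $s$ in compact subsets of $\mathbb{C}\setminus\gamma_M$, since $|s-\zeta|$ is bounded below there. Differentiating under the integral sign (or applying Morera's theorem) shows that $I$ is holomorphic on $\mathbb{C}\setminus\gamma_M$.

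\emph{Step 2: entire extension.} I would introduce the family of contours $\gamma_{M',c}$, made of the branches $|\sigma|=c/\tau$ for $\tau\ge M'$ joined by the segment $\tau=M'$, and the associated integrals $I_{M',c}$. The parameters range over $c\in(\pi/4,3\pi/4)$ and $M'$ near $M$. On and between any two such branches, $2|\sigma\tau|\in(\pi/2,3\pi/2)$, so $\cos(2\sigma\tau)<0$. Thus $|e^{e^{-\zeta^2}}|\le 1$ on the region between them, with doubly exponential decay along the branches as in Step 1. Truncate both contours at height $\tau=T$ and close the region between them with short horizontal segments of length $O(1/T)$. On these segments the integrand of $\frac{1}{2\pi i}\int \frac{e^{e^{-\zeta^2}}}{s-\zeta}\,d\zeta$ is $O(1)$, so their contribution tends to $0$ as $T\to\infty$. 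By the residue theorem, the two Cauchy integrals differ exactly by $e^{e^{-s^2}}$ when $s$ lies in the region between the contours, and they agree otherwise.

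Consequently the function defined by \eqref{eq:entireghost} coincides, near any point of $\gamma_M$, with the analogous expression built from a slightly moved contour (a different $c$, or a different $M'$ for points on the horizontal segment) that avoids that point. Each such expression is holomorphic there, so $F$ extends holomorphically across $\gamma_M$ and is entire. The points where the segment meets the branches are handled by moving $c$ and $M'$ simultaneously. I expect this bookkeeping, namely checking that the closing segments vanish and that the residue appears with the correct sign for the clockwise orientation, to be the main technical obstacle, although it is routine.

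\emph{Step 3: the growth condition \eqref{eq:condition}.} For $s=iy$ with $y>M$ we have $\sigma=0<\pi/(2y)$, so $iy\in D_M$ and
\[
F(iy)=e^{e^{y^2}}+I(iy).
\]
The distance from $iy$ to $\gamma_M$ is at least of order $1/y$, so $|I(iy)|\le Cy$ by the absolute bound from Step 1. Hence
\[
|F(iy)|\ge \exp\!\big(e^{y^2}\big)-Cy.
\]
For every $a,\epsilon>0$ we have $e^{y^2}-ae^{y/\epsilon}\to+\infty$ as $y\to+\infty$, so the quotient in \eqref{eq:condition} tends to $+\infty$. This completes the planned proof.
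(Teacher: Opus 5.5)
Your proposal follows the paper's proof step for step. Step 1 is the paper's decay estimate for $e^{e^{-\zeta^2}}$ on the branches $|\sigma|=\pi/(2\tau)$. Step 2 is the paper's ``modify the contour slightly to avoid $s_0$'' combined with Cauchy's theorem. Your restriction to branches $|\sigma|=c/\tau$ with $c\in(\pi/4,3\pi/4)$ is what actually makes the closing segments negligible, which the paper does not spell out. Step 3 is the paper's bound $|I(iy)|\le Cy$, obtained from $\operatorname{dist}(iy,\gamma_M)\gtrsim 1/y$.

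The sign you postponed is not routine, however: it comes out opposite to what \eqref{eq:entireghost} needs. Take $\gamma'=\gamma_{M',c}$ with $M'<M$ and $c\in(\pi/2,3\pi/4)$. Then $\gamma_M\subset D'$, so a single such contour handles every point of $\gamma_M$, corners included. Let $U=D'\setminus\overline{D_M}$ and $s\in U$. The cycle $\gamma'-\gamma_M$, closed at height $T$, has $U$ on its right, so it is the negatively oriented boundary of $U$. The kernel $1/(s-\zeta)=-1/(\zeta-s)$ supplies a second minus sign. Applying Cauchy's formula on $U\cap\{\tau<T\}$ and letting $T\to\infty$ gives
\[
I_{\gamma'}(s)-I_{\gamma_M}(s)=+\,e^{e^{-s^2}},\qquad s\in U,
\]
while $I_{\gamma'}=I_{\gamma_M}$ on $D_M$. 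Thus $I_{\gamma'}$ continues the interior branch, and the interior branch exceeds the exterior one by $+e^{e^{-s^2}}$. As a sanity check, for $g\equiv 1$ on the unit circle traversed clockwise, $\frac{1}{2\pi i}\oint \frac{d\zeta}{s-\zeta}$ equals $1$ inside and $0$ outside.

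Hence the function that glues to an entire function is $I$ off $D_M$ and $I-e^{e^{-s^2}}$ on $D_M$; near $\gamma_M$ it equals $I_{\gamma'}-e^{e^{-s^2}}$. The expression \eqref{eq:entireghost} as printed jumps by $2e^{e^{-s^2}}\neq 0$ across $\gamma_M$. So your ``Consequently\dots'' step cannot be carried out for the statement as written.

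The paper's proof makes the same slip: its relation $I_2(s_0)=I_1(s_0)+e^{e^{-s_0^2}}$ should read $I_2(s_0)=I_1(s_0)-e^{e^{-s_0^2}}$. To correct the statement, replace $+e^{e^{-s^2}}$ by $-e^{e^{-s^2}}$ on $D_M$, or alternatively orient $\gamma_M$ with $D_M$ on its left. With that change your argument goes through as written. Step 3 is unaffected, since $|F(iy)|=|I(iy)-e^{e^{y^2}}|\ge e^{e^{y^2}}-Cy$.
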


\begin{proof}
    On the contour $\gamma_M$ we have \(\left|e^{e^{-\zeta^2}}\right|
=
\exp\!\big(-e^{\tau^2-\sigma^2}\big) \), and therefore the integral $I(s)$ is locally uniformly convergent and defines holomorphic functions $I_1$ and $I_2$ on $D_M$ and on $\mathbb{C}\setminus \overline{D_M},$ respectively. If $s_0 \in \gamma_M$ we can modify the contour slightly to avoid $s_0,$ and thus see that $I_1$ has analytic continuation to a neighbourhood of $s_0.$ The same is true for $I_2.$
Application of the Cauchy theorem implies that the two extensions are related by
\[
I_2(s_0)= I_1(s_0)+e^{e^{-s_0^2}}.
\]
Therefore, the function defined by \eqref{eq:entireghost} admits an analytic continuation to the whole complex plane which is independent of the initial choice of $M.$ 

Now we prove that $F(s)$ satisfies the growth condition \eqref{eq:condition}. For $y>M,$ we have \(F(iy)=e^{e^{y^2}}+I(iy).\) Since the function $e^{e^{y^2}}$ is easily seen to satisfy \eqref{eq:condition}, our proof will be complete once we show that $I(iy)$ does not interfere with the super-exponential growth.

Denote by $s_1^{\pm}= \pm \frac{\pi}{2 y_1}+iy_1$ the points on $\gamma_M$ closest to $s=iy.$ By considering the isosceles triangle with vertices at $s$ and $s_1^{\pm}$ it follows that $d \geq \frac{\pi}{2y_1},$ where \(d= \inf_{\zeta \in \gamma_M} |\zeta-iy|.\) Since \( \lim_{y \to + \infty} |y-y_1|=0\) uniformly, there is a positive constant $C_1$ such that $C_1 y \geq y_1$ for all pairs \((y,y_1).\) It follows that,  for all $\zeta \in \gamma_M$, 
    \[ |\zeta- iy| \geq d \geq \frac{\pi}{2y_1} \geq \frac{\pi}{2C_1y},\] 
    and therefore
    \begin{equation}\label{eq:Ibound}
        |I(iy)| \leq \frac{1}{2 \pi}\int_{\gamma_M}\Bigg |\frac{e^{e^{-\zeta^2}}}{\zeta-iy}\Bigg | \,d\zeta \leq Cy, \quad C= \frac{C_1}{\pi^2}\int_{\gamma_M} \bigg | e^{e^{-\zeta~2}} \bigg| \,d\zeta< + \infty. 
    \end{equation}
    From \eqref{eq:Ibound} it follows that \(F(iy) \sim e^{e^{y^2}} \) as $y \to + \infty$, satisfying \eqref{eq:condition} as desired.
\end{proof}

\begin{remark}
  The construction of entire functions with prescribed growth properties in specified regions via generalized Cauchy integrals appears to originate in work of Morimoto and Yoshino \cite{Morimoto}. The method was later systematized by Kaneko \cite{Kaneko} who used it to provide explicit examples of Fourier hyperfunctions with support at infinity. Variants of this construction have also appeared in other areas of Complex Analysis. In particular, Eremenko and Goldberg \cite{Eremenko} employed related techniques to disprove a conjecture of Hayman and Erdős concerning asymptotic curves of entire functions. More recently, Rempe \cite{Rempe} used similar constructions in the study of entire functions with pathological dynamical behavior.
\end{remark}

\begin{theorem}\label{thm:ghost}
The entire function $F$ defined in \eqref{eq:entireghost} 
induces a Laplace pair $(f_l,f_r)$ over $\mathbb{C}$ with separatrix  $\Re(s)=0.$ This Laplace pair is a ghost transition with $f_r(t)-f_l(t)=f(t),$ where $f$ is a smooth function of $t$ given by 

\[f(t)=\frac{1}{2\pi i}\int_{\gamma_M}e^{e^{- \zeta^2}}e^{\zeta t} \,d\zeta.\]

\end{theorem}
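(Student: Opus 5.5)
The plan is to compute Laplace transforms directly. Let $g(\zeta)=e^{e^{-\zeta^2}}$ and define $f(t)=\frac{1}{2\pi i}\int_{\gamma_M} g(\zeta)e^{\zeta t}\,d\zeta$. On $\gamma_M$ we have $|g(\zeta)|=\exp(-e^{\tau^2-\sigma^2})$ with $\sigma^2\le \pi^2/(4M^2)$, so $|g|$ decays super-exponentially in $\tau$. Meanwhile $|e^{\zeta t}|=e^{\sigma t}\le e^{\pi|t|/(2M)}$, and this bound does not grow with $\tau$. Hence the integral defining $f$ converges absolutely. Every $t$-derivative only brings down a factor $\zeta^k$, which is harmless against the super-exponential decay, so $f$ is $C^\infty$, indeed entire in $t$.

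The key step is a sharper bound on $f$, obtained by letting $M$ vary. The integrand $g(\zeta)e^{\zeta t}$ is holomorphic in the region between $\gamma_M$ and $\gamma_{M'}$, and it decays at the top of that region. By Cauchy's theorem, $f$ is therefore independent of $M$, exactly as in Lemma \ref{lem:ghostconstruction}. This gives
\[
|f(t)|\le C_M\, e^{\pi|t|/(2M)}
\]
for every $M$, with $C_M=\frac{1}{2\pi}\int_{\gamma_M}|g|\,|d\zeta|<\infty$. Thus $f$ grows slower than $e^{\epsilon|t|}$ for every $\epsilon>0$. It follows that $f_r=fH$ has a Laplace transform converging absolutely on $\Re(s)>0$, and $f_l=f(H-1)$ has one converging absolutely on $\Re(s)<0$.

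Next I identify these transforms by Fubini. Fix $s$ with $\Re(s)>0$ and choose $M$ so large that $\pi/(2M)<\Re(s)$; then $\Re(\zeta)<\Re(s)$ on $\gamma_M$ and the double integral converges absolutely. This gives
\[
\int_0^\infty e^{-st}f(t)\,dt=\frac{1}{2\pi i}\int_{\gamma_M}\frac{g(\zeta)}{s-\zeta}\,d\zeta=I(s).
\]
Note that $s\notin \overline{D_M}$ here, because $D_M$ lies in the thin region $|\sigma|<\pi/(2\tau)$ and $\Re(s)>\pi/(2M)$. So $I(s)=F(s)$ by \eqref{eq:entireghost}. For $\Re(s)<0$ the same computation with $M$ large gives
\[
\int_{-\infty}^0 e^{-st}f(t)\,dt=-\frac{1}{2\pi i}\int_{\gamma_M}\frac{g(\zeta)}{s-\zeta}\,d\zeta=-I(s)=-F(s).
\]
Hence $\mathcal L(f_l)=\mathcal L(f(H-1))=F$ on $\Re(s)<0$. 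Together with the previous paragraph, $F$ is a Laplace pair $(f_l,f_r)$ over $\mathbb C$ with separatrix $\Re(s)=0$, and $f_r-f_l=f$. The step requiring the most care is the uniform choice of $M$ depending on $\Re(s)$, together with the observation that $F$ does not depend on $M$; both are supplied by Lemma \ref{lem:ghostconstruction}.

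It remains to show that $f$ is not zero on a set of positive measure. Since $f$ is real-analytic (indeed entire in $t$), it suffices to show $f\not\equiv 0$. Suppose instead that $f\equiv0$. Then $F=\mathcal L(f_r)\equiv 0$ on $\Re(s)>0$, so the entire function $F$ vanishes identically. This contradicts the growth condition \eqref{eq:condition} established in Lemma \ref{lem:ghostconstruction}, since $F(iy)\sim e^{e^{y^2}}$. Therefore $f$ vanishes only on a discrete set, so $f_r-f_l=f$ is nonzero almost everywhere, and the pair is a ghost transition.
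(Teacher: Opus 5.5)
Your proof is correct. For the Laplace representations you follow the paper's route: the Laplace representation of the Cauchy kernel, then Fubini on $\gamma_M\times(0,\infty)$ for $\Re(s)>\pi/(2M)$ (and the mirror computation on $(-\infty,0)$), then enlarging $M$. You also make explicit what the paper compresses into ``letting $M\to+\infty$''. First, $f$ is independent of $M$; in fact $\gamma_M$ and $\gamma_{M'}$ coincide for $\tau\ge M'$, so their difference is a bounded closed curve and no decay at the top is needed. Second, the bound $|f(t)|\le C_M e^{\pi|t|/(2M)}$ gives convergence on the full half-planes. The modulus formula $|g|=\exp(-e^{\tau^2-\sigma^2})$ holds only on the two branches of $\gamma_M$, not on the horizontal segment, but that segment is compact, so this is harmless.

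The genuine difference is the non-triviality step. The paper computes $f'''(0)=\frac{1}{2\pi i}\int_{\gamma_M}e^{e^{-\zeta^2}}\zeta^3\,d\zeta$ explicitly. It substitutes $w=-\zeta^2$ and $u=e^w$, and recognises Hankel's integral for $(1/\Gamma)'(1)$, obtaining $\gamma/2$. That argument is self-contained and does not use the estimate $|I(iy)|\le Cy$ from Lemma~\ref{lem:ghostconstruction}. It does, however, require deforming the image of $\gamma_M$ onto the Hankel contour and tracking orientations. You argue instead that $f\equiv 0$ would force $F=\mathcal{L}(f_r)\equiv 0$ on $\Re(s)>0$, hence everywhere by the identity theorem, contradicting $|F(iy)|\to\infty$. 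This is shorter and softer. It leans on the Lemma, though only on the weak consequence $F\not\equiv 0$. Combined with your observation that $f$ is entire in $t$, it also gives a sharper conclusion: $f_r-f_l$ vanishes only on a discrete set, so it is nonzero almost everywhere. The paper gets only a set of positive measure, via continuity at a point where $f\neq 0$.
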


\begin{proof}
    
We use the Laplace representation of the Cauchy kernel
\[
\frac{1}{s-\zeta}
=
\int_{0}^{+ \infty}e^{-st}e^{\zeta t} \,dt,
\] 
and, for $\Re(s)> \frac{\pi}{2M}$, express $F(s)$ by
\begin{equation*}\label{eq:beforefubini}
    F(s)= \frac{1}{2 \pi i}\int_{\gamma_M}e^{e^{-\zeta^2}}\int_{0}^{+ \infty}e^{-st} e^{\zeta t} \,dt \,d\zeta.
\end{equation*} The super-exponential decay of $e^{e^{-\zeta^2}}$ along $\gamma_M$ implies that the integrand \(e^{e^{- \zeta^2}}e^{-(s-\zeta)t}\) is absolutely integrable on \((\zeta, t) \in \gamma_M \times (0,+ \infty).\) 
Fubini's theorem thus allows the interchange of the order of integration, yielding the Laplace representation  
\begin{equation*}\label{eq:pi2M}
F(s)= \int_{0}^{+ \infty}e^{-st} \Bigg ( \frac{1}{2 \pi i} \int_{\gamma_M} e^{e^{- \zeta^2}}e^{\zeta t} \,d \zeta \Bigg )\,dt, \quad \Re(s)> \frac{\pi}{2M}.
\end{equation*}
Letting $M \to +\infty$ , we conclude that the representation extends up to its abscissa of convergence at $\Re(s)=0.$ The integrand $ e^{e^{- \zeta^2}}e^{\zeta t}$ is an analytic function of $\zeta$, and by Cauchy's theorem it follows that the integral $\displaystyle{f(t)=\frac{1}{2 \pi i} \int_{\gamma_M} e^{e^{- \zeta^2}}e^{\zeta t} \,d \zeta}$ is independent of $M$ and defines a smooth function of $t.$
Thus $F(s)$ admits a Laplace representation for $\Re(s)>0$ with determining function
\[
f_r(t)=  \left( \frac{1}{2 \pi i} \int_{\gamma_M} e^{e^{- \zeta^2}}e^{\zeta t} \,d\zeta  \right)H(t).
\]
Similarly, for $\Re(s)<0$ one obtains the determining function
\[
f_l(t)= \left(\frac{1}{2 \pi i} \int_{\gamma_M} e^{e^{- \zeta^2}}e^{\zeta t} \,d\zeta \right)(H(t)-1).
\]
Hence
\[
f_r(t)-f_l(t)= \frac{1}{2 \pi i} \int_{\gamma_M}e^{e^{-\zeta^2}}e^{\zeta t} \,d\zeta \equiv f(t).
\]

We now show that $f(t)$ is not identically zero. Differentiating three times at $t=0$ and performing the changes of variables $w=-\zeta^2$ and $u=e^w$, we obtain
 \begin{align*}f^{(3)}(0)&=\frac{1}{2 \pi i} \int_{\gamma_M}e^{e^{- \zeta^2}}\zeta^3 \,d\zeta \\
        & = \frac{1}{2}\frac{1}{2\pi i}\int_{\partial_{\pi}}e^{e^{w}}w \,dw= \frac{1}{2}\frac{1}{2 \pi i}\int_{\mathcal{H}}\frac{e^u}{u}\log(u) \,du \\& = \frac{1}{2}\bigg(\frac{1}{\Gamma(1)} \bigg)'=\frac{\gamma}{2}\end{align*} where $\partial_\pi$ denotes the right half-strip delimited by $\Re(s)=0$ and $|\Im(s)|< \pi,$ \(\mathcal{H}\) is the classical Hankel contour for the $\Gamma$ function (see section 13.2.4. of Krantz \cite{Krantz}) and $\gamma$ is the Euler-Mascheroni constant. 
Therefore $f(t) \equiv f_r(t)-f_l(t)$ is not identically zero and is non-zero on a set of positive Lebesgue measure. This concludes the proof.

\end{proof}

\subsection{Connection to the heat equation}

Let \( H: \mathbb{R} \times [0, + \infty) \longrightarrow \mathbb{C}\) be a smooth function of the real variables $(x,t).$ We say that $H(x,t)$ is a \emph{nulltemperature function} if it solves the Cauchy initial value problem for the complex heat equation

\begin{align}\label{eq:cauchy}
\frac{\partial H}{\partial t} = \frac{\partial^2H}{\partial x^2}, \qquad H(x,0)=0. \end{align} 

It has been known since Tikhonov \cite{Tikhonov} that unless suitable growth conditions are imposed on $H(x,t),$ the heat IVP \eqref{eq:cauchy} admits non-trivial solutions. Since then,  considerable effort has been devoted to identifying growth conditions under which the trivial solution is the only solution to \eqref{eq:cauchy}. For a modern survey of results, see \cite{Ferretti}.

In \cite{Harper}, using new Paley--Wiener theorems and analytic continuation techniques for holomorphic functions on strips, Harper proved the following result, which connects the measure transition problem to the theory of the heat equation.

\begin{theorem}[Harper]\label{thm:Harper}

Let \(F(s)\) be a holomorphic function on the vertical strip \(\Omega_{-1,1}.\) If for each $0<|x|<1$ we have 

\begin{equation}\label{eq:l2norm}
    \int_{-\infty}^{+ \infty} |F(x+iy)|^2 \,dy < + \infty,
\end{equation} then $F$ has Laplace representations $F(s)= \mathcal{L}(f_l)(s)$ and $F(s)= \mathcal{L}(f_r)(s)$ valid on $\Omega_{-1,0}$ and $\Omega_{0,1},$ respectively. Moreover, the function given by \begin{equation}\label{eq:fundacalor} H(x,t)=\int_{-\infty}^{+ \infty} e^{-t \lambda^2}e^{ix \lambda}f(\lambda) \,d\lambda, \quad f(\lambda)=f_r(\lambda)-f_l(\lambda) \end{equation} is a nulltemperature function.
    
\end{theorem}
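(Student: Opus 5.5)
I would prove Harper's theorem in two stages: first build the two Laplace representations from Paley--Wiener theory on vertical lines, then check that $H$ solves the heat equation with zero initial data.

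\emph{Step 1: the Laplace representations.} Fix $0<x<1$. By \eqref{eq:l2norm}, $y\mapsto F(x+iy)$ lies in $L^2(\mathbb{R})$, so Plancherel gives a function $g_x\in L^2(\mathbb{R})$ with
\[
F(x+iy)=\int_{-\infty}^{+\infty}e^{-iyt}g_x(t)\,dt
\]
in the $L^2$ sense. Set $f_r(t)=e^{xt}g_x(t)$. I expect this to be independent of $x\in(0,1)$. To see this, I would apply Cauchy's theorem to $F(s)e^{st}$ on rectangles with vertical sides $\Re(s)=x_1,x_2$. The horizontal edges are controlled by the $L^2$ bounds on the two lines: if the $L^2$ norm is locally uniform in $x$, a Fubini argument shows that $\int_{x_1}^{x_2}|F(x+iy)|^2dx$ is integrable in $y$, so along a sequence $R_k\to\infty$ the horizontal integrals tend to $0$. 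This is the Paley--Wiener theorem for strips. It yields $F=\mathcal{L}(f_r)$ on $\Omega_{0,1}$, where $e^{-xt}f_r\in L^2$ for every $0<x<1$. The same argument on $\Omega_{-1,0}$ gives $f_l$ with $F=\mathcal{L}(f_l)$ there.

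\emph{Step 2: the nulltemperature function.} Put $f=f_r-f_l$. The weighted $L^2$ bounds show that $f(\lambda)e^{-x\lambda}$ is in $L^2$ for every $0<|x|<1$. Consequently $f$ grows at most like $e^{\epsilon|\lambda|}$ in an $L^2$ sense, and the integral defining $H(x,t)$ converges absolutely for $t>0$. Differentiation under the integral sign is then justified by the Gaussian factor $e^{-t\lambda^2}$, and $\partial_tH=\partial_x^2H$ follows from $\partial_t e^{-t\lambda^2+ix\lambda}=\partial_x^2e^{-t\lambda^2+ix\lambda}$. It remains to show $H(x,t)\to0$ as $t\to0^+$; this initial condition is the crux.

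\emph{Main obstacle: the initial condition.} I would express $H$ through $F$ directly. For $t>0$, write
\[
H(x,t)=\int e^{-t\lambda^2+ix\lambda}f_r(\lambda)\,d\lambda-\int e^{-t\lambda^2+ix\lambda}f_l(\lambda)\,d\lambda .
\]
Represent the Gaussian as
\[
e^{-t\lambda^2}=\frac{1}{\sqrt{4\pi t}}\int e^{-u^2/4t}e^{iu\lambda}\,du,
\]
and then shift the $\lambda$-integration by Parseval onto the lines $\Re(s)=\pm\delta$. Each term becomes a Gaussian-smoothed integral of $F(\pm\delta+iy)$ against the kernel $e^{-(y-x)^2/4t}$ (with $y$ shifted by the translation); the factors $e^{\pm\delta\cdot}$ are what make the $\lambda$-integrals of $f_r,f_l$ converge on those lines. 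Since $F$ is holomorphic in the strip with the uniform $L^2$ control of Step 1, the two line integrals can be moved toward each other, as in the proof of Theorem~\ref{thm:hol}: their difference is a contour integral over the thin rectangle $|\Re(s)|\le\delta$, and the horizontal edges vanish along the sequence $R_k$. Letting $t\to0$ and then $\delta\to0$, the two Gaussian smoothings should converge (in $L^2_{\mathrm{loc}}$ in $x$) to the same boundary value of $F$ on the imaginary axis, so $H(\cdot,t)\to0$. The delicate points are the justification of the Parseval and Fubini interchanges and the choice of the topology in which $H(x,0)=0$ holds. If locally uniform $L^2$ control is not automatic from the pointwise-in-$x$ hypothesis \eqref{eq:l2norm}, I would first establish it by applying Theorem~\ref{thm:lindaloffi} to $\int_y^{y+1}|F|^2$ on closed substrips.
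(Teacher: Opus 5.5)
The paper gives no proof of this theorem (it only cites Harper), so your argument has to stand on its own. The step you call the crux does not. Your Parseval step is fine: for a fixed $\delta\in(0,1)$ it gives
\[
H(x,t)=\frac{1}{2\sqrt{\pi t}}\left(\int_{\delta-i\infty}^{\delta+i\infty}-\int_{-\delta-i\infty}^{-\delta+i\infty}\right)e^{(s+ix)^2/4t}\,F(s)\,\frac{ds}{i}.
\]
You then join the two lines across $\Re(s)=0$ by horizontal edges at heights $\pm R_k$ and assert that these edges vanish. Nothing controls $F$ near the separatrix at large heights: whatever uniform bounds Step~1 gives live on compact subsets of $0<|x|<1$.

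Worse, the claim proves too much. The integrand $e^{(s+ix)^2/4t}F(s)$ is holomorphic in the rectangle, so vanishing edges would force $H(x,t)=0$ for every fixed $t>0$, hence $f_r=f_l$. This contradicts Theorems~\ref{thm:ghost} and~\ref{thm:moraldahistoria}, whose $F$ does satisfy the hypotheses.

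The initial condition works for a different reason: the Gaussian localizes in height, not in $\Re(s)$, since $|e^{(s+ix)^2/4t}|=e^{(\sigma^2-(\tau+x)^2)/4t}$ for $s=\sigma+i\tau$. Fix $A>\delta$.
\begin{itemize}
\item On $|\tau+x|>A$, both line integrals are $O(t^{-1/2}e^{(\delta^2-A^2)/4t})$, by Cauchy--Schwarz against $\|F(\pm\delta+i\cdot)\|_{L^2}$.
\item The pieces with $|\tau+x|\le A$ differ, by Cauchy's theorem on the \emph{bounded} rectangle $|\sigma|\le\delta$, $|\tau+x|\le A$, by two horizontal edges. Each edge is $O(t^{-1/2}e^{(\delta^2-A^2)/4t})$ because $F$ is bounded on that compact set.
\end{itemize}
Hence $H(x,t)\to0$ as $t\to0^+$, locally uniformly in $x$, with one fixed $\delta$. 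No limit $\delta\to0$ is needed, no $L^2_{\mathrm{loc}}$ boundary values, and no information on $F$ near $\Re(s)=0$ beyond finite heights.

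Step~1 has a second gap, which you flag but cannot close as proposed. The Paley--Wiener argument needs $x\mapsto\int|F(x+iy)|^2\,dy$ to be bounded (or at least integrable) on compact subsets of $(0,1)$ and of $(-1,0)$. Theorem~\ref{thm:lindaloffi} cannot supply this: it requires an a priori bound of type \eqref{eq_PL} inside the strip, which is not assumed and which functions like $e^{e^{-s^2}}$ violate.

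No other argument can supply it either. That function of $x$ is merely lower semicontinuous, so Baire category gives local boundedness only on an open dense set. In fact a tract construction as in Section~\ref{sec_ghosts} gives a counterexample, if the tract follows the curve $\Re(s)=\tfrac12+1/\Im(s)$ for $\Im(s)\ge M$ and does not meet the line $\Re(s)=\tfrac12$. The result is an entire $F$ with $F(x+iy)=O(1/|y|)$ on every vertical line, but with different Laplace representations on $\Omega_{0,1/2}$ and $\Omega_{1/2,1}$. So local boundedness of the $L^2$ norms on compact subsets of $0<|x|<1$ must be part of the hypothesis. With it, your Step~1 is the classical Paley--Wiener theorem for strips and is correct.

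There is also a minor slip: $e^{-x\lambda}f(\lambda)$ need not lie in $L^2$ for $0<|x|<1$, since only $f_r$ is controlled for $x>0$ and only $f_l$ for $x<0$. What is true, and all you use, is $e^{-\epsilon|\lambda|}f\in L^2$ for every $\epsilon>0$.
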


By imposing further restrictions on the vertical $L^2$ norm \eqref{eq:l2norm}, Harper was able to provide sufficient conditions under which ghost transitions do not exist. That is, under such conditions, uniqueness bounds for the heat equation may be applied to conclude that $H \equiv 0.$ This implies that $f_r=f_l$ almost everywhere via Fourier uniqueness.

It is then natural to ask whether the difference \[f_r(t)-f_l(t)=f(t)=\frac{1}{2\pi i}\int_{\gamma_M}e^{e^{- \zeta^2}}e^{\zeta t} \,d\zeta \] obtained from the ghost transition in Theorem \ref{thm:ghost} can be used to construct a non-trivial nulltemperature function via \eqref{eq:fundacalor}.

\begin{theorem}\label{thm:moraldahistoria}
    The function $H: \mathbb{R} \times [0, + \infty) \to \mathbb{C}$ given by
    \begin{equation}
    \label{eq_heat_solution}
        H(x,t)=\int_{-\infty}^{+ \infty} e^{-t \lambda^2}e^{ix \lambda}f(\lambda) \,d\lambda,
    \end{equation} where \( f(\lambda)=\frac{1}{2\pi i}\int_{\gamma_M}e^{e^{- \zeta^2}}e^{\zeta \lambda} \,d\zeta, \) is a non-trivial nulltemperature function.
\end{theorem}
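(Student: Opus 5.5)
The plan is to obtain a closed form for $H$ by integrating in $\lambda$ first, and to read off both the heat equation and the initial condition from that form. The first step is a growth bound on $f$. On $\gamma_M$ we have $|\Re\zeta|=\pi/(2\tau)\le \pi/(2M)$, so $|e^{\zeta\lambda}|\le e^{\pi|\lambda|/(2M)}$. Since $|e^{e^{-\zeta^2}}|=\exp(-e^{\tau^2-\sigma^2})$ is integrable along $\gamma_M$, this gives
\[
|f(\lambda)|\le C_M e^{\pi|\lambda|/(2M)} .
\]
As noted in the proof of Theorem \ref{thm:ghost}, $f$ does not depend on $M$. Hence $f$ grows at most like $e^{\epsilon|\lambda|}$ for every $\epsilon>0$, and $e^{-t\lambda^2}f(\lambda)\in L^1(\mathbb{R})$ for every $t>0$. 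So \eqref{eq_heat_solution} converges absolutely for $t>0$, and $H(x,0)$ will be understood as the limit $t\to0^+$.

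Next, for $t>0$ we apply Fubini on $\gamma_M\times\mathbb{R}$. This is legitimate because the double integrand is dominated by $\exp(-e^{\tau^2-\sigma^2})\,e^{-t\lambda^2+\pi|\lambda|/(2M)}$. The Gaussian integral $\int_{\mathbb{R}}e^{-t\lambda^2+(\zeta+ix)\lambda}\,d\lambda=\sqrt{\pi/t}\,e^{(\zeta+ix)^2/(4t)}$ then gives
\[
H(x,t)=\frac{1}{2\pi i}\int_{\gamma_M}e^{e^{-\zeta^2}}\sqrt{\frac{\pi}{t}}\,\exp\!\Big(\frac{(\zeta+ix)^2}{4t}\Big)\,d\zeta .
\]
Since $(\zeta+ix)^2=-(x-i\zeta)^2$, the kernel $t^{-1/2}e^{(\zeta+ix)^2/(4t)}$ is the heat kernel translated by the complex shift $i\zeta$. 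It therefore satisfies $\partial_t=\partial_x^2$ for each fixed $\zeta$. Its modulus is $t^{-1/2}\exp\big((\sigma^2-(\tau+x)^2)/(4t)\big)$, and the $x$- and $t$-derivatives carry at most polynomial factors. All of this is bounded locally uniformly in $(x,t)$ with $t>0$, while $e^{e^{-\zeta^2}}$ decays super-exponentially, so differentiation under the integral sign is justified. Thus $H$ is smooth on $t>0$ and solves \eqref{eq:cauchy} there.

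The main obstacle is the initial condition: showing $H(x,t)\to0$ as $t\to0^+$, locally uniformly in $x$ and together with the derivatives needed for smoothness up to $t=0$. I would exploit the freedom in $M$. Given a compact set $|x|\le X$, choose $M>X+1$, so that on $\gamma_M$ we have $\tau+x\ge1$ and $\sigma^2\le\pi^2/(4M^2)<1/2$. Then the exponent satisfies
\[
\frac{\sigma^2-(\tau+x)^2}{4t}\le -\frac{1}{8t},
\]
and the kernel together with all its $x,t$-derivatives is $O(t^{-N}e^{-1/(8t)})\to0$ uniformly on $\gamma_M$. Dominated convergence then gives $H$ and all its derivatives tending to $0$ as $t\to0^+$, uniformly for $|x|\le X$. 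So $H$ extends smoothly to $t=0$ with $H(x,0)=0$; in fact $H$ vanishes to infinite order there, consistent with Tikhonov-type behaviour.

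Finally, for non-triviality, fix $t>0$. Then $H(\cdot,t)$ is the Fourier transform of the $L^1$ function $e^{-t\lambda^2}f(\lambda)$. By Theorem \ref{thm:ghost}, $f$ is smooth and not identically zero, since $f^{(3)}(0)=\gamma/2$. Fourier uniqueness therefore implies $H(\cdot,t)\not\equiv0$, so $H$ is a non-trivial nulltemperature function.
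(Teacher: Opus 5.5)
Your proof is correct. Its first step is the paper's: interchange the order of integration and evaluate the Gaussian integral to obtain
\[
H(x,t)=\frac{1}{\sqrt{4\pi t}\,i}\int_{\gamma_M}e^{e^{-\zeta^2}}e^{-\frac{(x-i\zeta)^2}{4t}}\,d\zeta .
\]
After that the two routes diverge.

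The paper substitutes $\omega=i\zeta$ and recognises Chung's known nulltemperature function. So the heat equation, the zero initial data and non-triviality are all obtained by citation.

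You verify each property directly:
\begin{itemize}
\item the heat equation, because the kernel is a complex translate of the heat kernel, with differentiation under the integral sign justified by the super-exponential decay of $e^{e^{-\zeta^2}}$;
\item the initial condition, by using the independence of $f$ from $M$ to push the contour above the compact set $|x|\le X$, so that the kernel and all its derivatives are $O(t^{-N}e^{-c/t})$ uniformly on $\gamma_M$;
\item non-triviality, from $f\not\equiv 0$ (Theorem \ref{thm:ghost}) and Fourier uniqueness for the $L^1$ function $e^{-t\lambda^2}f(\lambda)$.
\end{itemize}

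The paper's route is shorter and places the example in the existing literature. Yours is self-contained and gains three things. It makes explicit that $H(x,0)=0$ must be read as the limit $t\to0^+$, since the defining $\lambda$-integral need not converge at $t=0$; the paper passes over this. It gives smoothness up to $t=0$, with vanishing to infinite order. It also ties the non-triviality of $H$ directly to the ghost transition $f_r-f_l\neq 0$, in the spirit of Theorem \ref{thm:Harper}.

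Two small repairs are needed. First, the bound $\sigma^2\le\pi^2/(4M^2)<1/2$ requires $M>\pi/\sqrt{2}$, so take $M>\max\{X+1,3\}$. Second, the identity $|e^{e^{-\zeta^2}}|=\exp(-e^{\tau^2-\sigma^2})$ (also stated in the paper) holds only on the curved parts of $\gamma_M$. On the compact horizontal segment the integrand is merely bounded, which leaves all your domination arguments intact.
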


\begin{proof}
Interchanging the order of integration in \eqref{eq_heat_solution} yields

\begin{equation*}\label{eq:heatnonunique} H(x,t)= \frac{1}{2 \pi i}\int_{\gamma_M}e^{e^{-\zeta^2}} \int_{-\infty}^{+ \infty}e^{ix \lambda}e^{-t\lambda^2+\zeta \lambda} \,d\lambda \,d\zeta  = \frac{1}{\sqrt{4 \pi t}i} \int_{\gamma_M} e^{e^{-\zeta^2}}e^{-\frac{(x-i\zeta)^2}{4t}} \,d\zeta . \end{equation*}
Performing the change of variable $i \zeta= \omega$  we recover the known non-trivial solution to the heat IVP \eqref{eq:cauchy}

\[H(x,t)=\frac{1}{\sqrt{4 \pi t}} \int_{-i \gamma_M} e^{e^{\omega^2}}e^{- \frac{(x- \omega)^2}{4t}} \,d\omega \] originally constructed by Chung in \cite{Chung}.
\end{proof}

\end{document}